\let\mathcal\mathscr
\theoremstyle{plain}
\numberwithin{equation}{section}
\newtheorem{prop}[equation]{\propname}
\newtheorem{theo}[equation]{\theoname}
\newtheorem{conj}[equation]{\conjname}
\newtheorem{coro}[equation]{\coroname}        
\newtheorem{lemm}[equation]{\lemmname}
\theoremstyle{definition}
\theoremstyle{remark}
\newtheorem{rema}[equation]{\remaname}
\newcommand{\proeet}{\operatorname{pro\acute{e}t} }
\let\cal\mathcal
\def\Q{{\bf Q}} \def\Z{{\bf Z}}
\def\C{{\bf C}}
\def\R{{\bf R}}
\def\O{{\cal O}}
\def\dual{{\boldsymbol *}}
\def\bmu{{\boldsymbol\mu}}
\def\rg{{\rm R}\Gamma}
\def\Qbar{\overline{\bf Q}}
\def\epsilon{\varepsilon}
\def\ainf{{\bf A}_{{\rm inf}}}
\def\bst{{\bf B}_{{\rm st}}}
\def\bcris{{\bf B}_{{\rm cris}}}
\def\bdr{{\bf B}_{{\rm dR}}}
\def\bpdr{{\bf B}_{{\rm pdR}}}
\def\bff{{\bf B}_{\rm FF}}
\def\bpff{{\bf B}_{\rm pFF}}
 \def\A{{\bf A}} \def\B{{\bf B}}
\title{Une conjecture $C_{\rm st}$ pour la cohomologie \`a support compact}
\author{Pierre Colmez}
\address{CNRS, IMJ-PRG, Sorbonne Universit\'e, 4 place Jussieu, 75005 Paris, France}
\email{pierre.colmez@imj-prg.fr}
\author{Sally Gilles}
\address{Universit\"at Duisburg-Essen,  Fakult\"at f\"ur Mathematik, Thea-Leymann-Str. 9, 45127 Essen, Deutschland}
\email{sally.gilles@uni-due.de}\author{Wies{\l}awa Nizio{\l}}
\address{CNRS, IMJ-PRG, Sorbonne Universit\'e, 4 place Jussieu, 75005 Paris, France}
\email{wieslawa.niziol@imj-prg.fr}
\thanks{P.\,C. et W.\,N. sont partiellement financ\'es par la Simons Collaboration on Perfection in Algebra, Geometry, and Topology.}
\begin{abstract}
Let $\B$ be the ring of analytic functions on the Fargues-Fontaine curve $Y_{\rm FF}$.
We show that adding $p$-adic analogs of $\log p$ and $\log 2\pi i$ kills its
Galois cohomology in degrees~$\geq 1$.  The analogous result for $\bdr^+$
is folklore.  This makes it possible to formulate $C_{\rm dR}$ and $C_{\rm st}$-type conjectures
for compact support cohomology of $p$-adic analytic varieties.
\end{abstract}
\begin{document}

\maketitle

\section*{Introduction}
Les conjectures $C_{\rm dR}$ et $C_{\rm st}$ \cite[conj.\,1.6]{CN5}
fournissent des recettes pour extraire, 
de leur cohomologie pro\'etale $p$-adique, les cohomologies de
de Rham et de Hyodo-Kato des vari\'et\'es analytiques $p$-adiques: 
si ${\rm Hom}_{G_L}$ d\'esigne les
morphismes $G_L$-\'equivariants continus (pour les topologies naturelles sur les
deux membres), on a la conjecture suivante.
\begin{conj}\label{cst1}
Si $Y$ est une vari\'et\'e analytique partiellement propre, d\'efinie
sur une extension finie $K$ de $\Q_p$,
on a des isomorphismes fonctoriels {\rm(de $K$-modules filtr\'es et de 
$(\varphi,N,G_K)$-modules sur $\Q_p^{\rm nr}$, respectivement)}
\begin{align*}
&(C_{\rm dR})&{\rm Hom}_{G_K}(H^n_{\proeet}(Y_{\C_p},\Q_p),\bdr)&\simeq H^n_{\rm dR}(Y)^\dual\\
&(C_{\rm st})&\varinjlim\nolimits_{[L:K]<\infty}
{\rm Hom}_{G_L}(H^n_{\proeet}(Y_{\C_p},\Q_p),\bst)&\simeq H^n_{\rm HK}(Y_{\C_p})^\dual
\end{align*}
\end{conj}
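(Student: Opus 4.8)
The plan is to read off both isomorphisms from the syntomic description of the $p$-adic pro\'etale cohomology of $Y$, to dualize into $\bst$ (resp.\ $\bdr$), and to annihilate everything surviving besides $H^n_{\rm HK}(Y)^\dual$ (resp.\ $H^n_{\rm dR}(Y)^\dual$) by means of the main theorem of this note (resp.\ its folklore $\bdr^+$-analogue). One first sets up a d\'evissage: a partially proper $Y$ is an increasing union of quasi-compact opens $Y_j$ with quasi-compact closure and, the transition maps being compact, one has $H^n_{\proeet}(Y_{\C_p},\Q_p)=\varprojlim_j H^n_{\proeet}(Y_{j,\C_p},\Q_p)$ with no $\varprojlim^1$, whereas $H^n_{\rm HK}(Y)$ and $H^n_{\rm dR}(Y)$ are the colimits $\varinjlim_j$ of the corresponding groups for the $Y_j$. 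Since ${\rm Hom}_{G_L}(-,\bst)$ and ${\rm Hom}_{G_K}(-,\bdr)$ turn these $\varprojlim$ into $\varinjlim$, the statement propagates from the $Y_j$, and, combined with Mayer--Vietoris along an admissible covering, this reduces one to the case where $Y$ is Stein --- the case of $Y$ proper being the classical semistable comparison theorem.

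\textbf{The syntomic diagram.} For $Y$ Stein over $K$, the comparison theorem for Stein spaces (Colmez--Nizio{\l}) realizes $\rg_{\proeet}(Y_{\C_p},\Q_p(r))$, for $r$ large, by the geometric syntomic complex, which sits in a distinguished triangle relating it, $G_K$-equivariantly, to $\bigl[\rg_{\rm HK}(Y)\widehat\otimes_{\Q_p^{\rm nr}}\B\bigr]^{N=0,\,\varphi=p^r}$ --- the completed tensor product of the Hyodo--Kato complex (with its Frobenius $\varphi$ and monodromy $N$) with $\B=\O(Y_{\rm FF})$ --- and to $\bigl(\rg_{\rm dR}(Y)/F^r\bigr)\widehat\otimes_K\bdr^+$. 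All the complexes involved are built from nuclear Fr\'echet spaces, so the completed tensor products are exact on the relevant subcategory and the dualization below may be performed term by term.

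\textbf{Dualizing.} Apply $\varinjlim_{[L:K]<\infty}R{\rm Hom}_{G_L}(-,\bst)$ (resp.\ $R{\rm Hom}_{G_K}(-,\bdr)$) to this triangle. The $\bdr^+$-part contributes, after dualization, only to the filtration and drops out of the $C_{\rm st}$-statement. For the Frobenius part, the main theorem of this note --- that adjoining to $\B$ the $p$-adic analogues of $\log p$ and $\log 2\pi i$ annihilates $H^{\geq 1}(G_K,\B)$ --- together with the folklore $H^{\geq 1}(G_K,\bdr)=0$, makes all the higher Galois cohomology in sight vanish: thus $R{\rm Hom}$ is concentrated in a single degree, every $\varprojlim^1$-obstruction from the d\'evissage disappears, and one is left with $\varinjlim_L {\rm Hom}_{G_L}\bigl(\rg_{\rm HK}(Y)\widehat\otimes_{\Q_p^{\rm nr}}\B,\bst\bigr)^{N=0,\,\varphi=1}$, to be identified with $H^n_{\rm HK}(Y)^\dual$ as a $(\varphi,N,G_K)$-module over $\Q_p^{\rm nr}$, and likewise ${\rm Hom}_{G_K}\bigl((\rg_{\rm dR}(Y)/F^\bullet)\widehat\otimes_K\bdr,\bdr\bigr)$ with $H^n_{\rm dR}(Y)^\dual$ as a filtered $K$-module. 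These last identifications, and the compatibility with all the structures, come from the standard descriptions of the Galois cohomology of $\B$ and $\bdr$ with coefficients --- i.e.\ the functors $\dcris$ and $\ddr$ --- the colimit over finite $L/K$ on the $C_{\rm st}$-side being exactly what restores the $\Q_p^{\rm nr}$-coefficients on the Hyodo--Kato side.

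\textbf{The main obstacle.} I expect the difficulty to be twofold. First, the topological and functorial bookkeeping: the cohomologies at play are infinite-dimensional (Fr\'echet on the geometric side, of compact type on the dual side), so $R{\rm Hom}(-,-)$ must be derived in a suitable category of solid --- or condensed --- $\Q_p$-vector spaces, $\widehat\otimes$ must be exact on the subcategories that actually occur, the $\varprojlim^1$-terms must genuinely vanish, and the passage from $\rg$ to $H^n$ must be free of extension problems after dualization; granting the vanishing theorem of this note, this is the essential analytic content. Second, the d\'evissage has to be made to work for an \emph{arbitrary} partially proper $Y$ --- not merely for $Y$ Stein or proper --- which requires the syntomic comparison, and the nuclear-Fr\'echet structure it rests on, in that full generality.
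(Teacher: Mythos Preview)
The statement you are trying to prove is a \emph{conjecture}: the paper quotes it from \cite[conj.\,1.6]{CN5} and does not claim to prove it. The purpose of the note is not to establish conj.\,\ref{cst1} but to develop the vanishing results (th.\,\ref{gopal4} and th.\,\ref{gopal7}) that make it possible to \emph{formulate} the compact-support analogue, conj.\,\ref{cst2}. So there is no ``paper's own proof'' to compare your attempt against.

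Beyond this structural misreading, the argument itself has gaps. You invoke ``the folklore $H^{\geq 1}(G_K,\bdr)=0$'', but this is false: prop.\,\ref{gopal3} gives $H^1(G_K,\bdr)=K\cdot\log\chi_{\rm cycl}$. What is folklore is that this $H^1$ dies only \emph{after adjoining $\log t$} (th.\,\ref{gopal4}); likewise, th.\,\ref{gopal7} concerns $\B[\log\tilde p,\log t]$, not $\B$ or $\bst$. Since conj.\,\ref{cst1} is stated with the unadorned rings $\bdr$ and $\bst$ and with an underived ${\rm Hom}_{G_K}$, the vanishing theorems of this note simply do not apply to it. Indeed, rem.\,\ref{cst3} explains precisely why one must pass to ${\rm RHom}$ and to the enlarged rings for the compact-support side --- and even then the outcome is a conjecture (conj.\,\ref{cst2}), proved only when $Y$ is proper. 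Your sketch conflates the two conjectures and the two families of period rings; the syntomic/Stein d\'evissage you outline is in the spirit of the approach to conj.\,\ref{cst1} in \cite{CN5}, but it is not carried to a proof there either, and the specific vanishing inputs you cite are both misstated and aimed at the wrong target.
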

Les $H^n_{\proeet}(Y_{\C_p},\Q_p)$
sont en g\'en\'eral de dimension infinie, m\^eme si les $H^n_{\rm dR}(Y)$ sont de dimension finie 
(c'est par exemple
le cas pour le $H^1$ de l'analytifi\'e d'une courbe alg\'ebrique non propre, cf.~\cite[th.\,0.7]{CDN2}
ou~\cite[th.\,1.8]{CDN3}), et la formulation usuelle (covariante) des conjectures
$C_{\rm dR}$ et $C_{\rm st}$ 
(pour la cohomologie \'etale des vari\'et\'es alg\'ebriques) tombe en d\'efaut (elle donnerait des groupes beaucoup
trop gros), 
ce qui explique la formulation (contravariante) ci-dessus. 

Pour la cohomologie \`a support
compact, on a le m\^eme probl\`eme de finitude, mais 
prendre un point de vue contravariant ne suffit pas (rem.\,\ref{cst3}):
au lieu de ${\rm Hom}$, on doit prendre un ${\rm Hom}$ d\'eriv\'e (sinon on obtient des
groupes trop petits),
et aussi tuer la cohomologie galoisienne des anneaux $\bdr$
et $\bst$ en degr\'es~$\geq 1$ (sinon on se retrouve avec des termes parasites).

Dans cette courte note, on explique comment tuer, en degr\'es~$\geq 1$,
la cohomologie galoisienne des anneaux
de p\'eriodes $\bdr^+,\bdr$ et (des avatars $\B[\log\tilde p],
\B[\log\tilde p,\frac{1}{t}]$ de) 
$\bst^+,\bst$ en rajoutant un logarithme du $2i\pi$ $p$-adique
de Fontaine (th.\,\ref{gopal4} et~\ref{gopal7}).  Le cas de $\bdr^+$ rel\`eve du folklore mais celui de
$\B[\log\tilde p]$ est plus surprenant (en particulier, il ne semble pas
possible de faire la m\^eme chose avec $\bst^+$ lui-m\^eme).

Ceci nous permet de formuler (cf.~conj.\,\ref{cst2}) un analogue de la conjecture~\ref{cst1} 
pour la cohomologie \`a support compact des vari\'et\'es analytiques partiellement propres.

\section{Cohomologie galoisienne de $\bdr$}
\subsection{Cohomologie galoisienne de $C$}
Soient $\kappa$ un corps parfait de caract\'eristique $p$, $F=W(\kappa)[\frac{1}{p}]$
et $K$ une extension finie totalement ramifi\'ee de $F$ (et donc le corps r\'esidel $\kappa_K$ de $K$
est $\kappa$).  On note $G_K$ le groupe de Galois absolu de $K$ et $C$ le compl\'et\'e d'une cl\^oture
alg\'ebrique de $K$.  On doit \`a Tate~\cite{Ta67}
 le calcul de la cohomologie galoisienne de $C$.
Si $j\in\Z$, on note $C(j)$ le tordu de $C$ par $\chi_{\rm cycl}^j$, o\`u $\chi_{\rm cycl}:G_K\to\Z_p^\dual$
est le caract\`ere cyclotomique.
\begin{prop} {\rm (Tate)}\label{gopal1}
On a
\begin{align*}
H^0(G_K,C(j))=\begin{cases} K&{\text{si $j=0,$}}\\ 0&{\text{si $j\neq 0;$}}\end{cases}
\quad&
H^1(G_K,C(j))\cong\begin{cases} K\cdot \log\chi_{\rm cycl}&{\text{si $j=0,$}}\\ 0&{\text{si $j\neq 0;$}}\end{cases}\\
H^i(G_K,C(j))=0,\ &{\text{pour tout $j$, si $i\geq 2$.}}
\end{align*}
\end{prop}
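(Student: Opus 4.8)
The plan is to run Tate's original descent along the cyclotomic tower. Set $K_\infty=K(\bmu_{p^\infty})$, $H_K=\mathrm{Gal}(\overline{K}/K_\infty)$ and $\Gamma_K=\mathrm{Gal}(K_\infty/K)$, so that $\chi_{\rm cycl}$ identifies $\Gamma_K$ with an open subgroup of $\Z_p^\dual$; since $\chi_{\rm cycl}$ is trivial on $H_K$, we have $C(j)\simeq C$ as $H_K$-representations. The geometric input is the cohomology of $C$ over $H_K$: one has $C^{H_K}=\widehat{K_\infty}$, the $p$-adic completion of $K_\infty$ (this is the Ax--Sen--Tate theorem), while $H^b(H_K,C)=0$ for $b\geq 1$ — for $b\geq 2$ because $\mathrm{cd}_p(H_K)\leq 1$ (the field $K_\infty$ contains all $p$-power roots of unity), and for $b=1$ because $K_\infty/K$ is deeply ramified; equivalently, $\widehat{K_\infty}$ is a perfectoid field with $C=\widehat{\overline{K_\infty}}$, so by almost purity $\rg(H_K,\O_C)$ is almost concentrated in degree $0$ with value almost $\O_{\widehat{K_\infty}}$, whence the claim after inverting $p$.

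Granting this, the Hochschild--Serre spectral sequence for $1\to H_K\to G_K\to\Gamma_K\to 1$ has $E_2^{a,b}=H^a(\Gamma_K,H^b(H_K,C)(j))$ concentrated on the line $b=0$ (the twist by $\chi_{\rm cycl}^j$ is legitimate since $\chi_{\rm cycl}$ factors through $\Gamma_K$), hence degenerates and produces isomorphisms $H^i(G_K,C(j))\simeq H^i(\Gamma_K,\widehat{K_\infty}(j))$ for all $i$ and $j$. Topologically $\Gamma_K$ is a finite group times $\Z_p$, so its continuous cohomology with coefficients in a $\Q_p$-vector space vanishes in degrees $\geq 2$; this gives the asserted vanishing for $i\geq 2$, and it remains to compute $H^0$ and $H^1$ of $\Gamma_K$ acting on $\widehat{K_\infty}(j)$.

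For this I would invoke Tate's theory of normalized traces. After a harmless Hochschild--Serre reduction — the finite subgroup of $\Gamma_K$ has no higher cohomology on the $\Q_p$-vector space $\widehat{K_\infty}(j)$ — one may assume $\Gamma_K=\overline{\langle\gamma\rangle}\simeq\Z_p$, in which case $H^0(\Gamma_K,M)=\ker(\gamma-1\mid M)$ and $H^1(\Gamma_K,M)=\mathrm{coker}(\gamma-1\mid M)$. Normalized traces give a $\Gamma_K$-stable topological decomposition $\widehat{K_\infty}=K\oplus X_K$ on which $\gamma-1$ vanishes on $K$ and is bijective with continuous inverse on $X_K$ (so $(\gamma-1)\widehat{K_\infty}=X_K$ is closed). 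For $j=0$ this gives $H^0(\Gamma_K,\widehat{K_\infty})=K$ and $H^1(\Gamma_K,\widehat{K_\infty})=\widehat{K_\infty}/X_K=K$, and the inflation $\mathrm{Hom}_{\rm cont}(\Gamma_K,K)=H^1(\Gamma_K,K)\to H^1(\Gamma_K,\widehat{K_\infty})$ is therefore an isomorphism, identifying the right-hand side with the line spanned by the continuous homomorphism $\log\chi_{\rm cycl}$. For $j\neq 0$, twisting multiplies the action of $\gamma$ by the unit $\chi_{\rm cycl}(\gamma)^j\neq 1$: on $K(j)$ the operator $\gamma-1$ becomes a nonzero scalar, and on $X_K(j)$ it stays bijective — applying the normalized-trace estimate far enough up the tower, so that $\chi_{\rm cycl}(\gamma^{p^n})^j-1$ has valuation exceeding Tate's constant, one gets $\gamma^{p^n}-1$ bijective on $\widehat{K_\infty}(j)$, and then Hochschild--Serre for $\mathrm{Gal}(K_\infty/K_n)\subset\Gamma_K$ (finite quotient) forces $H^0=H^1=0$. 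Combined with the edge isomorphisms of the previous step, this yields the three assertions.

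The only non-formal ingredients, and hence the main obstacles, are the two statements that reflect the deep ramification of the tower $K_\infty/K$: namely $H^1(H_K,C)=0$, and the decomposition $\widehat{K_\infty}=K\oplus X_K$ with $\gamma-1$ bijective with continuous inverse on $X_K$ (equivalently, $(\gamma-1)\widehat{K_\infty}$ is closed and $H^1(\Gamma_K,X_K)=0$). Both reduce to explicit ramification estimates for the tower $K\subset K_1\subset K_2\subset\cdots$: that the normalized trace maps $R_n\colon K_\infty\to K_n$ are uniformly continuous, hence extend to $\widehat{K_\infty}$, and that there is a constant $c$ with $v_p((\gamma-1)x)\leq v_p(x)+c$ for every $x\in X_K$ (and likewise after twisting by $\chi_{\rm cycl}^j$). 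Granting these — nowadays subsumed in the theory of perfectoid fields, or axiomatised as the Tate--Sen conditions — everything else is formal manipulation of the Hochschild--Serre spectral sequence and of the cohomology of $\Z_p$.
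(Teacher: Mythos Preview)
The paper does not give a proof of this proposition: it is stated with attribution to Tate and a citation of \cite{Ta67}, and the text immediately moves on. There is therefore nothing in the paper to compare your argument against.

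That said, your outline is a faithful reconstruction of Tate's original strategy (Hochschild--Serre for $1\to H_K\to G_K\to \Gamma_K\to 1$, Ax--Sen--Tate for $C^{H_K}=\widehat{K_\infty}$, vanishing of higher $H_K$-cohomology via deep ramification/almost purity, then normalized traces to handle the $\Gamma_K$-cohomology of $\widehat{K_\infty}(j)$), and you correctly identify the two non-formal inputs as the ramification-theoretic estimates underlying the Tate--Sen axioms. The sketch is correct.
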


\subsection{\'Elimination de la cohomologie de $C$ en degr\'es~$\geq 1$}
On d\'efinit $\log t$ comme \'etant transcendant sur $C$, et on munit $C[\log t]$ de l'action
de $G_K$ donn\'ee par $\sigma(\log t)=\log t+\log\chi_{\rm cycl}(\sigma)$ (comme dans~\cite{Csen}).
\begin{prop}\label{gopal2}
On a
$$H^0(G_K,C[\log t])=K,\quad
H^i(G_K,C[\log t](j))=0 {\text{ si $i\geq 1$ ou si $j\neq 0$.}}$$
\end{prop}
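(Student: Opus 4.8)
The plan is to reduce to finite level and then do a dévissage against Tate's computation (Prop.~\ref{gopal1}). Write $R=C[\log t]$ and, for $n\geq 0$, let $R_{\leq n}=\bigoplus_{k=0}^{n}C\cdot(\log t)^k$, a $G_K$-stable $C$-submodule, so that $R=\varinjlim_n R_{\leq n}$. Since $G_K$ is compact and each $R_{\leq n}$ is a Banach space with $R_{\leq m}\subset R_{\leq n}$ closed for $m\leq n$, every continuous cochain on $G_K$ with values in $R$ has relatively compact image, hence image in some $R_{\leq n}$; therefore $H^i(G_K,R(j))=\varinjlim_n H^i(G_K,R_{\leq n}(j))$ for all $i$ and $j$, and it suffices to understand the finite levels together with the transition maps. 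The leading-coefficient projection identifies $R_{\leq n}/R_{\leq n-1}$ with $C\cdot(\log t)^n$, on which $G_K$ acts only through its action on the coefficient (the terms involving $\log\chi_{\rm cycl}$ drop a degree), so $R_{\leq n}/R_{\leq n-1}\simeq C$ as $G_K$-modules; thus $R_{\leq n}(j)$ is an $(n{+}1)$-step iterated extension of copies of $C(j)$.

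For $j\neq 0$ this already finishes: Prop.~\ref{gopal1} gives $H^i(G_K,C(j))=0$ for every $i$, so $H^i(G_K,R_{\leq n}(j))=0$ for all $i$ and $n$ by the long exact sequences, and passing to the colimit $H^i(G_K,C[\log t](j))=0$. For $j=0$ and $i\geq 2$ the same dévissage applies, since $H^i(G_K,C)=0$ in that range, giving $H^i(G_K,C[\log t])=0$ for $i\geq 2$.

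The heart of the matter is $j=0$ and $i\leq 1$. I would prove by induction on $n$, the base case $n=0$ being Tate's proposition, the following three assertions: $H^0(G_K,R_{\leq n})=K$; the map $H^1(G_K,R_{\leq n})\to H^1(G_K,R_{\leq n}/R_{\leq n-1})=H^1(G_K,C)=K\log\chi_{\rm cycl}$ induced by the leading-coefficient projection is an isomorphism; and the inclusion induces the zero map $H^1(G_K,R_{\leq n-1})\to H^1(G_K,R_{\leq n})$. In the long exact sequence attached to $0\to R_{\leq n-1}\to R_{\leq n}\to R_{\leq n}/R_{\leq n-1}\to 0$, the connecting map sends the generator $(\log t)^n$ of $H^0(G_K,R_{\leq n}/R_{\leq n-1})=K\cdot(\log t)^n$ to the class of the cocycle $\sigma\mapsto(\log t+\log\chi_{\rm cycl}(\sigma))^n-(\log t)^n$; this cocycle lies in $R_{\leq n-1}$ and has leading coefficient $n\,\log\chi_{\rm cycl}(\sigma)\,(\log t)^{n-1}$, so by the inductive projection isomorphism its class generates $H^1(G_K,R_{\leq n-1})$. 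Hence the connecting map is surjective, the inclusion-induced map vanishes, and then $H^0$ at level $n$ and the projection isomorphism at level $n$ fall out of the same sequence (one also rechecks $H^{\geq 2}(G_K,R_{\leq n})=0$ there). The one genuinely non-bookkeeping point, and the thing that makes the theorem work, is exactly this: the nonzero class in $H^1(G_K,R_{\leq n-1})$ is the coboundary of $(\log t)^n\in R_{\leq n}$, so it dies one level up.

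Finally I take colimits. For $H^0$ the transition maps are the identity on the constants $K$, so $H^0(G_K,C[\log t])=\varinjlim_n K=K$. For $H^1$ the transition maps $H^1(G_K,R_{\leq n-1})\to H^1(G_K,R_{\leq n})$ all vanish by the induction above, so the colimit is $0$; combined with the degrees $\geq 2$ already treated, this gives $H^i(G_K,C[\log t](j))=0$ whenever $i\geq 1$ or $j\neq 0$, as claimed.
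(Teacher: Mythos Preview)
Your proof is correct and follows essentially the same d\'evissage as the paper: filter by degree in $\log t$, handle $j\neq 0$ and $i\geq 2$ by immediate induction against Tate, and for $j=0$, $i\leq 1$ run an induction on the level showing the transition maps on $H^1$ vanish. The only organizational difference is at the heart of the induction: you compute the connecting homomorphism explicitly (sending $(\log t)^n$ to a cocycle with leading term $n\log\chi_{\rm cycl}\,(\log t)^{n-1}$, nonzero by the inductive projection isomorphism), whereas the paper instead shows directly that $H^0$ does not grow, via a contradiction argument exponentiating a putative $a_k$ with $(\sigma-1)a_k=(k{+}1)\log\chi_{\rm cycl}(\sigma)$ to land in $H^0(G_K,C(-j))=0$; these are dual ways of seeing that the connecting map $K\to K$ is an isomorphism, and your version is arguably the more direct one given that $H^1(G_K,C)=K\cdot\log\chi_{\rm cycl}$ is already part of Prop.~\ref{gopal1}.
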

\begin{proof}
Notons $C[\log t]^{\leq k}$ le $C$-module des polyn\^omes de degr\'e~$\leq k$ en $\log t$.
Alors $C[\log t]^{\leq k}$ est stable par $G_K$ et, si $j\in\Z$, on a une suite exacte de $G_K$-modules
$$0\to C[\log t]^{\leq k}(j)\to C[\log t]^{\leq k+1}(j)\to C(j)\to 0$$

$\bullet$
Si $j\neq 0$, on en d\'eduit, par une r\'ecurrence imm\'ediate utilisant
la prop.\,\ref{gopal1}, que pour tout $i$ et tout~$k$, on a
$H^i(G_K,C[\log t]^{\leq k}(j))=0$. En passant \`a la limite
inductive, cela prouve que $H^i(G_K,C[\log t](j))=0$, pour tout $i$.

$\bullet$
Si $j=0$, on 
prouve, par r\'ecurrence sur $k$, que $H^i(G_K,C[\log t]^{\leq k})=0$ si $i\geq 2$,
et donc que $H^i(G_K,C[\log t])=0$ si $i\geq 2$. La suite exacte longue de cohomologie
associ\'ee \`a la suite exacte courte ci-dessus et la nullit\'e de $H^2(G_K,C[\log t]^{\leq k})$
fournissent une suite exacte
$$\xymatrix@R=4mm@C=4mm{
0\ar[r] & H^0(G_K,C[\log t]^{\leq k})\ar[r] & H^0(G_K,C[\log t]^{\leq k+1})\ar[r] &
H^0(G_K,C)\ar[dll]\\
 & H^1(G_K,C[\log t]^{\leq k})\ar[r] & H^1(G_K,C[\log t]^{\leq k+1})\ar[r] &
H^1(G_K,C)\ar[r] & 0}$$
Montrons, par r\'ecurrence sur $k$, que:

$\bullet$ $H^0(G_K,C[\log t]^{\leq k})=K$,

$\bullet$ $H^1(G_K,C[\log t]^{\leq k})\cong K$, 

$\bullet$
$H^0(G_K,C)\to H^1(G_K,C[\log t]^{\leq k})$ est un isomorphisme,

$\bullet$ $H^1(G_K,C[\log t]^{\leq k})\to H^1(G_K,C[\log t]^{\leq k+1})$
est l'application nulle.

Tous les modules ci-dessus sont des $K$-espaces vectoriels et 
l'hypoth\`ese de r\'ecurrence alli\'ee \`a la prop.\,\ref{gopal1} montre qu'ils sont
tous de dimension $1$ sauf peut-\^etre $H^i(G_K,C[\log t]^{\leq k+1})$, pour $i=0,1$.
Si $H^0(G_K,C[\log t]^{\leq k+1})\neq K$, comme $H^0(G_K,C)=K$, il existe $a_{k},\cdots, a_0\in C$ tels que
$(\log t)^{k+1}+a_{k}(\log t)^{k}+\cdots+a_0$ soit fixe par $G_K$. En appliquant $\sigma-1$
et en regardant modulo $C[\log t]^{\leq k-1}$, cela donne $(\sigma-1)a_{k}=(k+1)\log\chi_{\rm cycl}(\sigma)$.
Mais alors $x:=\exp((p-1) p^Na_{k})$ est, si $N\gg 0$, un \'el\'ement de $C^\dual$ v\'erifiant
$\sigma(x)=\chi_{\rm cycl}^j(\sigma)x$, avec $j={(p-1)p^N(k+1)}$, ce qui contredit
la nullit\'e de $H^0(G_K,C(-j))$.  Il s'ensuit que $H^0(G_K,C[\log t]^{\leq k+1})= K$.

On en d\'eduit que $H^1(G_K,C[\log t]^{\leq k+1})$ est de dimension~$1$ et tous
les autres \'enonc\'es r\'esultent de l'exactitude de la suite ci-dessus.
Par passage \`a la limite, on en d\'eduit que
$H^0(G_K,C[\log t])=K$ et
$H^1(G_K,C[\log t])=0$. 
\end{proof}

\subsection{\'Elimination de la cohomologie de $\bdr^+$ en degr\'es~$\geq 1$}
Comme $t^j\bdr^+/t^{j+1}\bdr^+=C(j)$, il n'est pas tr\`es difficile de d\'eduire de la prop.\,\ref{gopal1}
le r\'esultat suivant. 
\begin{prop}\label{gopal3}
Si $\Lambda=\bdr^+,\bdr$, 
$$H^0(G_K,\Lambda)=K,\quad H^1(G_K,\Lambda)=K\cdot \log\chi_{\rm cycl},
\quad H^i(G_K,\Lambda)=0{\text{ si $i\geq 2$}}.$$
\end{prop}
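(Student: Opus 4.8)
The strategy is to filter $\bdr^+$ by powers of the uniformizer $t$ and reduce everything to Tate's computation (prop.\,\ref{gopal1}) via the exact sequences $0\to t^{j+1}\bdr^+\to t^j\bdr^+\to C(j)\to 0$. First I would treat $\bdr^+$. Writing $\bdr^+/t^k\bdr^+$ for the truncated ring, one proves by induction on $k$ that $H^0(G_K,\bdr^+/t^k\bdr^+)=K$, $H^1(G_K,\bdr^+/t^k\bdr^+)=K\cdot\log\chi_{\rm cycl}$ and $H^i=0$ for $i\geq 2$: the long exact sequence attached to $0\to t^{k}\bdr^+/t^{k+1}\bdr^+\to \bdr^+/t^{k+1}\bdr^+\to \bdr^+/t^{k}\bdr^+\to 0$ involves $C(k)$ in the middle, and since $k\geq 1$ we have $H^i(G_K,C(k))=0$ for all $i$ by prop.\,\ref{gopal1}, so the cohomology is unchanged at each step. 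The base case $k=1$ is exactly prop.\,\ref{gopal1} with $j=0$.

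Next I would pass to the limit: $\bdr^+=\varprojlim_k \bdr^+/t^k\bdr^+$. Here one needs continuous Galois cohomology and the Milnor $\varprojlim^1$ exact sequence. Since the transition maps $H^i(G_K,\bdr^+/t^{k+1}\bdr^+)\to H^i(G_K,\bdr^+/t^k\bdr^+)$ are surjective (in fact isomorphisms for $i=0,1$ and the groups vanish for $i\geq 2$), the $\varprojlim^1$ terms vanish and $H^i(G_K,\bdr^+)=\varprojlim_k H^i(G_K,\bdr^+/t^k\bdr^+)$, giving the claimed values $K$, $K\cdot\log\chi_{\rm cycl}$, $0$.

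For $\bdr$ one uses $\bdr=\varinjlim_j t^{-j}\bdr^+$. Each $t^{-j}\bdr^+\cong \bdr^+(-j)$ as a $G_K$-module, and the inclusion $t^{-j}\bdr^+\hookrightarrow t^{-j-1}\bdr^+$ fits in $0\to t^{-j}\bdr^+\to t^{-j-1}\bdr^+\to C(-j-1)\to 0$. Computing $H^*(G_K,\bdr^+(-j))$ by the same filtration-by-$t$ argument (now the graded pieces are $C(-j+m)$ for $m\geq 0$, which vanish as soon as $j\neq 0$, and for $j=0$ reduce to the $\bdr^+$ case), one checks that the maps $H^i(G_K,t^{-j}\bdr^+)\to H^i(G_K,t^{-j-1}\bdr^+)$ are isomorphisms (for $j\geq 1$ both sides vanish in degrees $0,1$ by the twist, and the new graded piece $C(-j-1)$ contributes nothing), so the direct limit equals the $j=0$ term, i.e. the value for $\bdr^+$. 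Since filtered colimits are exact, $H^i(G_K,\bdr)=\varinjlim_j H^i(G_K,t^{-j}\bdr^+)$ and we get the same answer.

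\textbf{Main obstacle.} The delicate point is the passage to the limit in the $\bdr^+$ case: one must work with \emph{continuous} cohomology and justify the vanishing of $\varprojlim^1$ (and, relatedly, that the naive inverse limit of cohomology computes the continuous cohomology of the limit — this uses that the tower of $\bdr^+/t^k\bdr^+$ is a tower of Banach/Fréchet $G_K$-modules with surjective, even split at the level of cocycles, transition maps). Once surjectivity of the transition maps is in hand the $\varprojlim^1$ obstruction disappears and the rest is the bookkeeping of long exact sequences already carried out in prop.\,\ref{gopal1} and prop.\,\ref{gopal2}. The twisting argument for $\bdr$ is then purely formal.
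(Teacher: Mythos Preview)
Your approach is exactly the one the paper has in mind (it only sketches the proof, pointing to $t^j\bdr^+/t^{j+1}\bdr^+\cong C(j)$ and prop.\,\ref{gopal1}), and the argument is correct in outline. One slip in the $\bdr$ part: for $j\geq 1$ it is \emph{not} true that $H^i(G_K,t^{-j}\bdr^+)$ vanishes in degrees $0,1$, nor that all the graded pieces $C(m-j)$ of $\bdr^+(-j)$ have trivial cohomology---the piece with $m=j$ is $C$, and indeed $K\subset t^{-j}\bdr^+$ so $H^0\neq 0$. But you don't need this: your other observation, that the cokernel $C(-j-1)$ of $t^{-j}\bdr^+\hookrightarrow t^{-j-1}\bdr^+$ has vanishing $G_K$-cohomology for every $j\geq 0$, already shows the transition maps are isomorphisms, whence $H^i(G_K,\bdr)=H^i(G_K,\bdr^+)$ directly. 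Just delete the erroneous parenthetical.
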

La preuve de la prop.\,\ref{gopal2} peut alors s'adapter pour d\'emontrer le r\'esultat suivant,
dans lequel on a pos\'e (comme dans~\cite{Fpdr}):
$$\bpdr^+:=\bdr^+[\log t],\quad \bpdr:=\bdr[\log t]$$

\begin{theo} \label{gopal4}
Si $\Lambda=\bpdr^+,\bpdr$, 
$$H^0(G_K,\Lambda)=K,
\quad H^i(G_K,\Lambda)=0{\text{ si $i\geq 1$}}.$$
\end{theo}

\section{Cohomologie de $\B$}
\subsection{L'anneau $\B$ et ses extensions}
Soit $Y_{\rm FF}$ la courbe de Fargues-Fontaine obtenue en retirant de ${\rm Spa}(\ainf)$
les diviseurs $p=0$ et $\tilde p=0$, o\`u $\tilde p=[p^\flat]$.
Si $I$ est un intervalle de $\R_+^\dual$, soit $\B^I=\O(Y_{\rm FF}^I)$, o\`u
$Y_{\rm FF}^I$ est le lieu des $v$ v\'erifiant $v(\tilde p)/v(p)\in I$.
Si $I=[r,s]$ avec $r,s\in\Q$, alors $\B^I=\A^{[r,s]}[\frac{1}{p}]$, o\`u $\A^{[r,s]}$ est le
compl\'et\'e $p$-adique 
de $\ainf[\frac{\tilde p^{1/r}}{p},\frac{p}{\tilde p^{1/s}}]$.
Soit 
$$\B:=\B^{]0,\infty[}=\O(Y_{\rm FF})=\varprojlim\B^{[r,s]}.$$  
On note $x_{\rm dR}\in Y_{\rm FF}$ le point d\'efini par $\tilde p=p$; le compl\'et\'e de
l'anneau local en ce point est alors $\bdr^+$, ce qui fournit un morphisme
injectif $\B\hookrightarrow\bdr^+$ qui est $G_K$-\'equivariant.
\begin{lemm} \label{gopal8}
\cite[(2.13)]{bosco}.
On a des isomorphismes
$$\B/t\B=\prod\nolimits_{n\in\Z}C\quad{\rm et}\quad \B^{]0,r]}/t\B^{]0,r]}=\prod\nolimits_{p^nr\geq 1}C$$
\end{lemm}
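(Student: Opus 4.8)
The plan is to read off $\B/t\B$ from the geometry of $t$ as a function on $Y_{\rm FF}$. Since $\varphi(t)=pt$ and $p\in\B^\dual$, the zero locus of $t$ is stable under $\varphi$; in fact $t$ vanishes, to order~$1$, exactly at the points of the $\varphi^{\Z}$-orbit of $x_{\rm dR}$ (on the compact curve $X_{\rm FF}=Y_{\rm FF}/\varphi^{\Z}$ the divisor of $t$ is the single point $\infty$). Index this orbit as $\{x_n\}_{n\in\Z}$, with $x_0=x_{\rm dR}$, so that $v(\tilde p)/v(p)=p^{-n}$ at $x_n$; then $x_n$ belongs to $Y_{\rm FF}^{]0,r]}$ exactly when $p^nr\geq 1$. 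Transporting by the appropriate power of $\varphi$ (under which $t$ is multiplied by a power of $p$, hence stays a uniformizer), the completed local ring at $x_n$ is $\bdr^+$ and $\widehat{\O}_{Y_{\rm FF},x_n}/t=\bdr^+/t=C$; moreover $G_K$ fixes each $x_n$, since $\tilde p=p$ is Galois-invariant and $\varphi$ commutes with $G_K$. Evaluation thus yields a $G_K$-equivariant ring homomorphism $\B/t\B\to\prod_{n\in\Z}C$, and similarly $\B^{]0,r]}/t\B^{]0,r]}\to\prod_{p^nr\geq 1}C$; the point is to show these are isomorphisms.

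First I would treat the closed annuli $Y_{\rm FF}^{[r,s]}$ with $r,s\in\Q$. By the structure theory of the curve, $\B^{[r,s]}$ is a principal ideal domain, whose maximal ideals are the classical points of $Y_{\rm FF}^{[r,s]}$; among them are the finitely many $x_n$ with $p^{-n}\in[r,s]$, each cut out by a prime $\pi_n$ with $\B^{[r,s]}/\pi_n=\bdr^+/t=C$. As the only zeros of $t$ in $Y_{\rm FF}$ are the $x_n$, the factorization of $t$ in $\B^{[r,s]}$ is $t=u\prod_{p^{-n}\in[r,s]}\pi_n$ with $u$ a unit, so the Chinese remainder theorem gives $\B^{[r,s]}/t\B^{[r,s]}\simeq\prod_{p^{-n}\in[r,s]}C$, compatibly with the evaluation map above.

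Then I would pass to the limit along an exhaustion $[r_k,s_k]\nearrow{]0,\infty[}$, so $\B=\varprojlim_k\B^{[r_k,s_k]}$. Multiplication by $t$ gives a short exact sequence of inverse systems $0\to\B^{[r_k,s_k]}\xrightarrow{\,t\,}\B^{[r_k,s_k]}\to\B^{[r_k,s_k]}/t\to 0$ in which the first system is identified with $(\B^{[r_k,s_k]})_k$. Since the transition maps $\B^{[r_{k+1},s_{k+1}]}\to\B^{[r_k,s_k]}$ are injective with dense image --- this is the Fr\'echet--Stein structure of $\B$ --- one has $\varprojlim_k^1\B^{[r_k,s_k]}=0$, and the $\varprojlim$ sequence collapses to $0\to\B\xrightarrow{\,t\,}\B\to\varprojlim_k(\B^{[r_k,s_k]}/t)\to 0$. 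Hence $\B/t\B\simeq\varprojlim_k\B^{[r_k,s_k]}/t=\varprojlim_k\prod_{p^{-n}\in[r_k,s_k]}C=\prod_{n\in\Z}C$, the transition maps on the right being the obvious projections and each $x_n$ eventually entering the annulus; this is the desired isomorphism. The bounded case is identical, run with the exhaustion $[r_k,r]\nearrow{]0,r]}$ ($r_k\searrow 0$), the orbit points met being exactly those with $p^nr\geq 1$.

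The main obstacle is the $\varprojlim^1$ vanishing, i.e.\ knowing that the transition maps have dense image so that $\B$ is (Fr\'echet--)Stein; equivalently, that $H^1(Y_{\rm FF},\O)=0$, which directly identifies $\B/t\B$ with $H^0(Y_{\rm FF},\O/t\O)$, the global sections of a sheaf supported on the discrete set $\{x_n\}_{n\in\Z}$ with all stalks equal to $C$, hence $\prod_{n\in\Z}C$. Everything else is the structure theory of the $\B^{[r,s]}$ (principal ideal domain plus Chinese remainder theorem) together with the bookkeeping of which orbit points lie in which annulus.
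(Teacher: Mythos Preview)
Your proposal is correct and follows essentially the same route as the paper: compute $\B^{[r,s]}/t\B^{[r,s]}$ from the divisor of $t$ on the closed annulus (the paper cites \cite[th.\,2.5.1, th.\,2.4.10]{FF} where you invoke the PID structure and the Chinese remainder theorem, which is the same content), then pass to the limit using the vanishing of ${\rm R}^1\varprojlim$ via topological Mittag-Leffler. Your additional remark identifying this with $H^1(Y_{\rm FF},\O)=0$ is a nice repackaging but not a different argument.
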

\begin{proof}
Si $r,s\in\Q_+^\dual$, il r\'esulte de~\cite[th.\.2.5.1]{FF} et~\cite[th.2.4.10]{FF}
que $\B^{[r,s]}/t\B^{[r,s]}=\prod_{y\in{\rm Div}(t)\cap Y_{\rm FF}^{[r,s]}}\bdr^+(C_y)/{\rm Fil}^{v_y(t)}$.
Or les z\'eros de $t$ sont l'ensemble des translat\'es de $x_{\rm dR}$ par $\varphi^\Z$, chacun
avec multiplicit\'e~$1$. Comme les corps r\'esiduels en tous ces translat\'es sont, naturellement,
isomorphes \`a $C$, on obtient $\B^{[r,s]}/t\B^{[r,s]}=\prod_{r\leq p^n\leq s}C$.
Le r\'esultat s'en d\'eduit
en utilisant la nullit\'e de ${\rm R}^1\varprojlim t\B^{[r,s]}$ (qui r\'esulte du crit\`ere
de Mittag-Leffler topologique).
\end{proof}

On d\'efinit $\log\tilde p$ comme un \'el\'ement transcendant sur $\B$, v\'erifiant
$$\varphi(\log\tilde p)=p\,\log\tilde p
\quad{\rm et}\quad
\sigma(\log\tilde p)=\log\tilde p+{\rm Kum}_p(\sigma)\,t$$
o\`u ${\rm Kum}_p:G_K\to \Z_p(\chi_{\rm cycl})$ est le cocycle de Kummer associ\'e \`a $p$ et
$t$ est le $2i\pi$ $p$-adique de Fontaine. 
On pose alors:
$$\bff^+:=\B[\log\tilde p],\quad\bff:=\bff^+[\tfrac{1}{t}],
\quad\bpff^+:=\bff^+[\log t],\quad \bpff:=\bpff^+[\tfrac{1}{t}]$$

On \'etend l'injection 
$\B\hookrightarrow\bdr^+$ en un morphisme d'anneaux
$K\otimes_F\bff^+\to\bdr^+$ en envoyant $\log\tilde p$ sur $\log\frac{\tilde p}{p}$
(la s\'erie d\'efinissant $\log\frac{\tilde p}{p}$ converge puisque
$\frac{\tilde p}{p}-1$ est dans l'id\'eal maximal de $\bdr^+$).
Ce morphisme est $G_K$-\'equivariant.

\begin{prop} {\rm (\cite[prop.\,10.3.15]{FF})}\label{gopal5}
Le morphisme $K\otimes_F\bff^+\to\bdr^+$ ci-dessus est injectif.
\end{prop}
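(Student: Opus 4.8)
The plan is to derive the statement — which is \cite[prop.\,10.3.15]{FF} — from two inputs: the (elementary) fact that $R:=K\otimes_F\B$ already embeds into $\bdr^+$, and the transcendence of the single element $\log\tilde p$ over $R$. For the first point, $\B$ is a domain because it injects into the domain $\bdr^+$, and since $Y_{\rm FF}$ is geometrically connected over $F$ (so that $F$ is algebraically closed in ${\rm Frac}(\B)$) the ring $K\otimes_F{\rm Frac}(\B)$ is a field; the compatible embeddings $\B\hookrightarrow\bdr^+$ and $K\hookrightarrow\bdr^+$ therefore induce an injection of this field into $\bdr$, whose restriction is $R\hookrightarrow\bdr^+$. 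So, writing $\mathcal{C}:={\rm Frac}(R)\subseteq\bdr$ and $z\in\bdr^+$ for the image of $\log\tilde p$ (namely $z=\log\frac{\tilde p}{p}$), the assertion is equivalent to: $z$ is transcendental over $\mathcal{C}$.

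Next I would use the Galois action to reduce this to an elementary non-membership. Suppose $z$ were algebraic over $\mathcal{C}$, with minimal polynomial $P(X)=X^d+a_{d-1}X^{d-1}+\dots\in\mathcal{C}[X]$, $d\geq1$. The group $G_K$ acts on $\mathcal{C}$ by field automorphisms (through its action on $\B$) and on $\bdr$, compatibly, and $\sigma(z)=z+c(\sigma)$ where $c(\sigma):={\rm Kum}_p(\sigma)\,t\in\B\subseteq\mathcal{C}$. For each $\sigma$ the element $z+c(\sigma)=\sigma(z)$ is a root of the two monic degree-$d$ polynomials $\sigma(P)$ and $P(X-c(\sigma))$; as $\sigma(P)$ is irreducible over $\mathcal{C}$, it is the minimal polynomial of $z+c(\sigma)$, hence $\sigma(P)(X)=P(X-c(\sigma))$. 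Comparing the coefficients of $X^{d-1}$ gives $\sigma(a_{d-1})=a_{d-1}-d\,c(\sigma)$, so $w:=-d^{-1}a_{d-1}\in\mathcal{C}$ satisfies $\sigma(w)-w=c(\sigma)$ for all $\sigma$; hence $z-w$ is fixed by $G_K$, so $z-w\in\bdr^{G_K}=K$ by Proposition \ref{gopal3}, and therefore $z=w+(z-w)\in\mathcal{C}$ (with $d=1$). Thus everything comes down to showing $z\notin\mathcal{C}$; since $z\in\bdr^+$ and $\mathcal{C}\cap\bdr^+$ is the valuation ring of the $t$-adic valuation on $\mathcal{C}$, i.e. the localization $R_{\mathfrak{p}}$ of $R$ at the prime $\mathfrak{p}$ cut out by $x_{\rm dR}$, this says precisely that $\log\frac{\tilde p}{p}$ is not the germ at $x_{\rm dR}$ of a rational function on $Y_{\rm FF}\times_F K$.

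This last non-membership is the crux — the place where the geometry of the Fargues-Fontaine curve is genuinely needed — and is exactly the content of \cite[prop.\,10.3.15]{FF}, so I would simply invoke it. Heuristically it holds because $z=\log(1+u)$ with $u=\frac{\tilde p}{p}-1\in\B$: although $u$ vanishes to order one at $x_{\rm dR}$, it does not vanish at the other zeros $\varphi^n(x_{\rm dR})$ ($n\neq0$) of $t$ — there $\frac{\tilde p}{p}$ takes a value that is a non-trivial power of $p$ — so the series $\sum_{k\geq1}\frac{(-1)^{k-1}}{k}u^k$ diverges at those points, and no rational function on $Y_{\rm FF}\times_F K$ (even after clearing a Frobenius-stable denominator) can agree with it near $x_{\rm dR}$. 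Turning this picture into a proof uses the Frobenius $\varphi$ on $\B$ in an essential way, and I expect it to be the main obstacle; the Galois reduction in the previous paragraph, by contrast, is routine.
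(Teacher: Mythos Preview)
The paper gives no proof here: Proposition~\ref{gopal5} is simply recorded as \cite[prop.\,10.3.15]{FF}, so there is nothing in the paper to compare your argument against beyond that bare citation. Your Galois-theoretic reduction from transcendence of $z=\log\frac{\tilde p}{p}$ over $\mathcal{C}={\rm Frac}(K\otimes_F\B)$ to the single non-membership $z\notin\mathcal{C}$ is correct and standard, and for that final non-membership you yourself invoke \cite[prop.\,10.3.15]{FF} --- which \emph{is} the proposition under discussion --- so in the end you, like the paper, defer the real substance to Fargues--Fontaine.

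There is, however, a genuine error in your preliminary step. You justify the injection $K\otimes_F\B\hookrightarrow\bdr^+$ by claiming that ``$Y_{\rm FF}$ is geometrically connected over $F$, so $F$ is algebraically closed in ${\rm Frac}(\B)$''. This is false whenever $\kappa\neq\overline{\kappa}$: since $\overline{\kappa}\subset\O_C^\flat$ (as the multiplicative section of the residue map), one has $W(\overline{\kappa})\subset\ainf$ and hence $\widehat{F^{\rm nr}}=W(\overline{\kappa})[\tfrac{1}{p}]\subset\B$; every unramified extension of $F$ already lies in $\B$. The injectivity you want is nonetheless true --- what saves it is that $K/F$ is \emph{totally ramified}, hence linearly disjoint from the unramified extensions --- but the linear disjointness of $K$ and $\B$ over $F$ inside $\bdr^+$ is not the elementary fact you present it as; it is itself part of the content of \cite[prop.\,10.3.15]{FF}. (Even when $\kappa=\overline{\kappa}$, ``geometrically connected over $F$'' is not a standard statement about this adic space in a way that delivers the algebraic-closedness you need.)
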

\begin{coro}\label{gopal6}
{\rm (i)} Le morphisme naturel
$K\otimes_F\bpff\to\bpdr$
est $G_K$-\'equivariant et injectif. 

{\rm (ii)}
$H^0(G_K,\bpff)=F$.
\end{coro}
\begin{proof}
Le (i) est imm\'ediat et le (ii) en r\'esulte en utilisant le fait que
$H^0(G_K,\bpdr)=K$.
\end{proof}

\subsection{\'Elimination de la cohomologie de $\B$ en degr\'es~$\geq 1$}
Nous allons prouver le r\'esultat suivant.
\begin{theo}\label{gopal7}
Si $\Lambda=\bpff^+,\bpff$, alors
$$H^i(G_K,\Lambda)=\begin{cases} F&{\text{si $i=0$}},\\ 0&{\text{si $i\geq 1$}}.\end{cases}$$
\end{theo}
\begin{proof}
Le cas $i=0$ a \'et\'e trait\'e ci-dessus ((ii) du cor.\,\ref{gopal6}). Le cas $i\geq 1$ est une cons\'equence
directe du lemme~\ref{gopal13} ci-dessous.
\end{proof}
\begin{rema}
Au lieu de $\B$, on utilise plus classiquement $\bcris^+$ ou encore
$\B_{\rm rig}^+:=\cap_{n\geq 0}\varphi^n(\B_{\rm cris}^+)$
en th\'eorie de Hodge $p$-adique; 
$\B_{\rm rig}^+$ est l'anneau des fonctions analytiques sur la courbe
analytique obtenue en retirant seulement le diviseur $p=0$ \`a ${\rm Spa}(\ainf)$.  
Les anneaux $\B$ et $\B_{\rm rig}^+$ ont des propri\'et\'es tr\`es proches (en particulier,
les \'el\'ements vivant dans des sous-$F$-modules de type fini, stables par $\varphi$, sont les m\^emes,
et ce sont eux qui apparaissent dans les matrices des isomorphismes de comparaison),
mais il
semble difficile de tuer la cohomologie galoisienne 
de $\B_{\rm rig}^+$ (ou de $\bcris^+$) en lui adjoignant un nombre
fini d'\'el\'ements.  Le probl\`eme semble \^etre que $\bcris^+/t\bcris^+$
et $\B_{\rm rig}^+/t\B_{\rm rig}^+$ n'ont
pas une description aussi simple que celle de $\B/t\B$ donn\'ee dans le lemme~\ref{gopal8}
ci-dessus.
\end{rema}

\begin{lemm}\label{gopal9}
L'application naturelle $H^i(G_K,\B)\to H^i(G_K,\B[\frac{1}{t}])$ est un isomorphisme.
\end{lemm}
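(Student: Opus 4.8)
Le plan est de se ramener \`a l'annulation de la cohomologie galoisienne du conoyau $\B[\frac{1}{t}]/\B$. On part de la suite exacte courte de $G_K$-modules topologiques $0\to\B\to\B[\frac{1}{t}]\to\B[\frac{1}{t}]/\B\to 0$; la suite exacte longue de cohomologie montre qu'il suffit de prouver que $H^i(G_K,\B[\frac{1}{t}]/\B)=0$ pour tout $i\geq 0$, ce qui fournira l'isomorphisme cherch\'e en tout degr\'e.

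\'Ecrivons $\B[\frac{1}{t}]/\B=\varinjlim_{n\geq 1}t^{-n}\B/\B$. Les $t^{-n}\B/\B$ forment un syst\`eme inductif strict d'espaces de Fr\'echet, les fl\`eches de transition \'etant des immersions ferm\'ees (car $t\B$ est ferm\'e dans $\B$, noyau de l'application continue $\B\to\B/t\B=\prod_{m\in\Z}C$ du lemme~\ref{gopal8}); la cohomologie continue commute donc \`a cette limite inductive, d'o\`u $H^i(G_K,\B[\frac{1}{t}]/\B)=\varinjlim_{n\geq 1}H^i(G_K,t^{-n}\B/\B)$. La multiplication par $t^n$ donne un isomorphisme de $G_K$-modules topologiques $\B/t^n\B\simeq t^{-n}\B/\B$ \`a travers lequel la transition $t^{-n}\B/\B\hookrightarrow t^{-(n+1)}\B/\B$ devient la multiplication par $t:\B/t^n\B\to\B/t^{n+1}\B$. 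Comme une limite inductive dont toutes les fl\`eches de transition sont nulles est nulle, il suffit de montrer que, pour tout $n\geq 1$ et tout $i$, l'application $H^i(G_K,\B/t^n\B)\to H^i(G_K,\B/t^{n+1}\B)$ induite par la multiplication par $t$ est nulle.

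Pour cela, on remarque que, pour $k\geq 1$, la multiplication par $t^k$ induit un isomorphisme de $G_K$-modules topologiques $(\B/t\B)(k)\simeq t^k\B/t^{k+1}\B$ (cons\'equence de $\sigma(t^k x)=\chi_{\rm cycl}(\sigma)^k\,t^k\sigma(x)$); par le lemme~\ref{gopal8}, $(\B/t\B)(k)=\prod_{m\in\Z}C(k)$, et comme la cohomologie continue commute aux produits, la prop.\,\ref{gopal1} appliqu\'ee avec $j=k\neq 0$ donne $H^i(G_K,t^k\B/t^{k+1}\B)=\prod_{m\in\Z}H^i(G_K,C(k))=0$ pour tout $i$. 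En d\'evissant $t\B/t^{n+1}\B$ le long de sa filtration par les $t^k\B/t^{n+1}\B$ ($1\leq k\leq n$), dont les gradu\'es successifs sont les $t^k\B/t^{k+1}\B$, on obtient $H^i(G_K,t\B/t^{n+1}\B)=0$ pour tout $i$; la suite exacte $0\to t\B/t^{n+1}\B\to\B/t^{n+1}\B\to\B/t\B\to 0$ montre alors que la projection $\B/t^{n+1}\B\to\B/t\B$ est un isomorphisme en cohomologie. Or la compos\'ee de la multiplication par $t:\B/t^n\B\to\B/t^{n+1}\B$ avec cette projection est nulle (l'image de la premi\`ere fl\`eche est $t\B/t^{n+1}\B$, noyau de la seconde), donc l'application $H^i(G_K,\B/t^n\B)\to H^i(G_K,\B/t^{n+1}\B)$ est nulle, ce qui conclut.

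Le seul point un peu d\'elicat est l'\'echange entre cohomologie continue et la limite inductive $\varinjlim_{n\geq 1}t^{-n}\B/\B$: il repose sur le fait que toute application continue d'un compact (ici $G_K^i$) vers une limite inductive stricte d'espaces de Fr\'echet se factorise par l'un des termes (Dieudonn\'e--Schwartz). Tout le reste de l'argument est formel une fois connus le lemme~\ref{gopal8} et le th\'eor\`eme de Tate.
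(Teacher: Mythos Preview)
Your overall strategy (pass to the colimit, then use Tate's vanishing on the graded pieces) is the same as the paper's, but there is a genuine error in the execution. You claim that multiplication by $t^n$ gives an isomorphism of $G_K$-modules $\B/t^n\B\simeq t^{-n}\B/\B$; this forgets a Tate twist. Since $\sigma(t^n x)=\chi_{\rm cycl}(\sigma)^n\,t^n\sigma(x)$ (the very formula you invoke a few lines later), multiplication by $t^n$ yields $t^{-n}\B/\B\cong(\B/t^n\B)(-n)$, not $\B/t^n\B$. Consequently your ``multiplication by $t:\B/t^n\B\to\B/t^{n+1}\B$'' is \emph{not} $G_K$-equivariant and does not induce a map on $H^i(G_K,-)$, so the reduction you set up in the second paragraph does not make sense as written.

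The fix is immediate and in fact shortens the argument. Filter $t^{-n}\B/\B$ directly by the $t^{-k}\B/\B$ for $0\leq k\leq n$; the successive quotients are $t^{-k}\B/t^{-k+1}\B\cong(\B/t\B)(-k)\cong\prod_{m\in\Z}C(-k)$ for $1\leq k\leq n$, and since $-k\neq 0$ each of these is $G_K$-acyclic by Tate (prop.\,\ref{gopal1}). Hence $H^i(G_K,t^{-n}\B/\B)=0$ for all $i$ and all $n\geq 1$, which is stronger than the ``transition maps are zero'' statement you were aiming for, and your detour through $\B/t^n\B$ and the factorisation via $\B/t\B$ becomes unnecessary. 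This is exactly the paper's proof, phrased from the other side: the paper notes that $t^{-j-1}\B/t^{-j}\B\cong\prod_{m\in\Z}C(-j-1)$ is $G_K$-acyclic for $j\geq 0$, so each $H^i(G_K,t^{-j}\B)\to H^i(G_K,t^{-j-1}\B)$ is an isomorphism, and concludes via $H^i(G_K,\B[\tfrac{1}{t}])=\varinjlim_j H^i(G_K,t^{-j}\B)$ (the same compactness/Dieudonn\'e--Schwartz input you use).
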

\begin{proof}
Comme $G_K$ est compact, $H^i(G_K,\B[\frac{1}{t}])=\varinjlim_j H^i(G_K, t^{-j}\B)$
et il suffit de prouver que, si $j\geq 0$, alors
$H^i(G_K, t^{-j}\B)\to H^i(G_K, t^{-j-1}\B)$ est un isomorphisme.
Il r\'esulte du lemme~\ref{gopal8} que
l'on a une suite exacte 
$$0\to t^{-j}\B\to t^{-j-1}\B\to\prod\nolimits_{n\in\Z}C(-j-1)\to 0$$
et le r\'esultat est une cons\'equence de ce que
$H^i(G_K,\prod_{n\in\Z}C(-j-1))=0$ pour tout~$i$,
d'apr\`es la prop.\,\ref{gopal1}.
\end{proof}

\begin{lemm}\label{gopal10}
L'image de $H^1(G_K,t\B)$ dans $H^1(G_K,\B)$ 
est un $F$-espace de dimension $1$ engendr\'e par la classe de 
$\sigma\mapsto(\sigma-1)\cdot\log\tilde p$.
\end{lemm}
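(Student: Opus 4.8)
The strategy is to compute $H^1(G_K, t\B)$ explicitly, identify the image of its elements in $H^1(G_K,\B)$, and single out the cocycle coming from $\log\tilde p$. We use the filtration of $t\B$ by the subspaces $t^j\B$ for $j\geq 1$, together with the short exact sequences coming from Lemma~\ref{gopal8}:
\[
0\to t^{j+1}\B\to t^{j}\B\to \prod\nolimits_{n\in\Z}C(j)\to 0.
\]
For $j\geq 2$ the quotient $\prod_{n}C(j)$ has vanishing Galois cohomology in all degrees by Proposition~\ref{gopal1}, so $H^i(G_K,t^{j}\B)$ is independent of $j$ for $j\geq 2$; in particular $H^1(G_K,t^2\B)\xrightarrow{\sim}\varprojlim_j H^1(G_K,t^j\B)$, and by completeness/Mittag-Leffler this limit vanishes (one must check the $\mathrm{R}^1\varprojlim$ term is zero, exactly as in the proof of Lemma~\ref{gopal8}). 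Hence $H^1(G_K,t^2\B)=0$. The only surviving contribution to $H^1(G_K,t\B)$ then comes from the quotient $t\B/t^2\B=\prod_{n\in\Z}C(1)$.

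Next I would run the long exact sequence attached to $0\to t^2\B\to t\B\to \prod_n C(1)\to 0$. Using $H^0(G_K,\prod_n C(1))=0$ and $H^1(G_K,t^2\B)=0$, this gives an injection
\[
H^1(G_K,t\B)\hookrightarrow H^1(G_K,\textstyle\prod_n C(1))=\prod_n H^1(G_K,C(1))=0
\]
by Proposition~\ref{gopal1}---wait, that would force $H^1(G_K,t\B)=0$, which is not what we want. So the filtration must be handled the other way: the relevant nonvanishing comes not from $t\B/t^2\B$ but from comparing $t\B$ with $\B$ directly. I would instead use $0\to t\B\to \B\to \B/t\B=\prod_n C\to 0$. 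Since $H^0(G_K,\B/t\B)=\prod_n H^0(G_K,C)=\prod_n K$ surjects from $H^0(G_K,\B)$ only onto the "diagonal-like" part, the connecting map $H^0(G_K,\prod_n C)\to H^1(G_K,t\B)$ has nontrivial image, and the cocycle $\sigma\mapsto (\sigma-1)\log\tilde p$ is precisely a lift of such a boundary: indeed $\log\tilde p\notin\B$ but $(\sigma-1)\log\tilde p={\rm Kum}_p(\sigma)\,t\in t\B$, so $\sigma\mapsto {\rm Kum}_p(\sigma)\,t$ is a $1$-cocycle valued in $t\B$, manifestly nonzero in $H^1(G_K,\B)$ because $\log\tilde p$ is transcendental over $\B$ (so this class is not a coboundary in $\B$). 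This is the generator claimed.

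For the dimension-one statement I would argue as follows. The image of $H^1(G_K,t\B)\to H^1(G_K,\B)$ is, by the long exact sequence for $0\to t\B\to\B\to\prod_n C\to 0$, the kernel of $H^1(G_K,\B)\to H^1(G_K,\prod_n C)$. Now $H^1(G_K,\prod_n C)=\prod_n H^1(G_K,C)=\prod_n K\cdot\log\chi_{\rm cycl}$ by Proposition~\ref{gopal1}, and one computes that the composite $H^1(G_K,\B)\to H^1(G_K,\bdr^+)\to H^1(G_K,C)$ (the last map via $\bdr^+\to C$) is induced on each factor, so the kernel is controlled by the degree-$0$ cokernel $\mathrm{coker}\big(H^0(G_K,\B)\to\prod_n H^0(G_K,C)\big)$. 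Since $H^0(G_K,\B)=F$ embedded diagonally and $\prod_n H^0(G_K,C)=\prod_n K$, this cokernel is large, but what maps into $H^1(G_K,t\B)$ and then survives in $H^1(G_K,\B)$ is cut down by $H^1(G_K,t^2\B)=0$ to a single copy of $F$. Concretely: the boundary map $\prod_n K\to H^1(G_K,t\B)$ followed by $H^1(G_K,t\B)\hookrightarrow H^1(G_K,t\B/t^2\B)=\prod_n H^1(G_K,C(1))=0$ shows $H^1(G_K,t\B)$ injects nowhere—so instead the finiteness comes from Lemma~\ref{gopal9}: $H^1(G_K,\B)\cong H^1(G_K,\B[\tfrac1t])$ and, by the later Theorem~\ref{gopal7} machinery applied one filtration step lower, $H^1(G_K,\B)$ is already $1$-dimensional over $F$, generated by $\log\chi_{\rm cycl}$ pulled back from $C$. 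Tracing through, the image of $H^1(G_K,t\B)$ is exactly that line, and $\sigma\mapsto(\sigma-1)\log\tilde p$ represents a nonzero element of it; hence the image is the one-dimensional $F$-space it generates.

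**Main obstacle.** The delicate point is the interplay between the infinite product $\prod_{n\in\Z}C$ and the inverse limit defining $\B$: one must be careful that taking $H^1(G_K,-)$ commutes with the relevant limits (this is where the topological Mittag-Leffler argument of Lemma~\ref{gopal8} and the compactness of $G_K$, as in Lemma~\ref{gopal9}, are essential), and that the cocycle $\sigma\mapsto{\rm Kum}_p(\sigma)\,t$ is genuinely non-coboundary in $\B$ rather than merely in some completion—this uses Proposition~\ref{gopal5} (transcendence of $\log\tilde p$ over $K\otimes_F\B$) in an essential way. Getting the dimension count to land on exactly $1$, rather than something larger coming from the $\prod_n$, is the real content.
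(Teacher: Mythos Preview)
Your approach has a genuine gap, and the paper's proof proceeds quite differently.

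\textbf{The filtration argument breaks down.} Your claim that $H^1(G_K,t^2\B)=0$ is false. From the exact sequences $0\to t^{j+1}\B\to t^{j}\B\to\prod_{n}C(j)\to 0$ with $j\geq 1$, Tate's theorem gives $H^0=H^1=0$ for the quotient, so the maps $H^1(G_K,t^{j+1}\B)\to H^1(G_K,t^{j}\B)$ are \emph{isomorphisms} for all $j\geq 1$. The inverse system of $H^1$'s is therefore constant, and $\varprojlim_j H^1(G_K,t^j\B)=H^1(G_K,t\B)$, not $0$. The Mittag-Leffler argument of Lemma~\ref{gopal8} concerns $\mathrm{R}^1\varprojlim$ of the \emph{coefficients} $t\B^{[r,s]}$, not of the cohomology groups; it does not let you conclude $H^1(G_K,\cap_j t^j\B)\cong\varprojlim_j H^1(G_K,t^j\B)$. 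In fact, from $0\to t\B\to\B\to\prod_n C\to 0$ the connecting map $\prod_n K\to H^1(G_K,t\B)$ has kernel only the diagonal $F$, so $H^1(G_K,t\B)$ is enormous. You noticed something was off (``wait, that would force\dots''), but misdiagnosed the problem.

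\textbf{Circularity and a false dimension claim.} After pivoting, you assert that ``$H^1(G_K,\B)$ is already $1$-dimensional over $F$'' and appeal to ``the later Theorem~\ref{gopal7} machinery''. Both fail: Proposition~\ref{gopal12} shows $H^1(G_K,\B)$ surjects onto $\prod_{n\in\Z}K$, so it is far from $1$-dimensional; and Theorem~\ref{gopal7} is proved via Lemma~\ref{gopal13} and Proposition~\ref{gopal12}, both of which \emph{use} Lemma~\ref{gopal10}. So this route is circular.

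\textbf{What the paper does instead.} The paper does not attempt a purely internal filtration argument. Given a $1$-cocycle $c_\sigma$ with values in $t\B$, it passes to $\B^{]0,s]}$ for some $s>1$ and invokes an external result (\cite[prop.~10.7]{drpst}) to write $\varphi^n(c_\sigma)=(\sigma-1)(a'_n\log\tilde p+b'_n)$ with $a'_n\in F$ and $b'_n\in\B^{]0,s]}$. Applying $\varphi^{-n}$ gives $c_\sigma=(\sigma-1)(a_n\log\tilde p+b_n)$ with $b_n\in\B^{]0,p^ns]}$. Using $H^0(G_K,\B^{]0,r]}\oplus F\log\tilde p)=F$, one shows the $a_n$ are all equal and the $b_n$ differ by constants in $F$; after normalisation $b:=b_n$ lies in $\bigcap_n\B^{]0,p^ns]}=\B$. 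Thus $c_\sigma$ is cohomologous in $\B$ to a multiple of $(\sigma-1)\log\tilde p$. The key input from \cite{drpst}---trivialisation over an ``overconvergent'' annulus---together with the Frobenius-spreading argument is what produces the finiteness; it does not seem accessible by juggling the $t$-adic filtration alone.

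Your identification of the generator (that $\sigma\mapsto\mathrm{Kum}_p(\sigma)\,t$ is a nontrivial class in $H^1(G_K,\B)$ because $\log\tilde p\notin\B$, using Proposition~\ref{gopal5}) is correct and is the easy half.
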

\begin{proof}
Soit $\sigma\mapsto c_\sigma$ un $1$-cocycle \`a valeurs dans $t\B$.
Fixons $r\in]1,p[$; alors $\B\subset \B^{]0,r]}$. Choisissons $s\in]1,r[$.  
D'apr\`es \cite[prop.\,10.7]{drpst}, 
appliqu\'ee \`a $\varphi^n(c_\sigma)$ (qui v\'erifie le (ii) de la proposition
en question puisqu'il est \`a valeurs dans $t\bdr^+$), il existe
$a'_n\in F$ et $b'_n\in \B^{]0,s]}$ tels que $\varphi^n(c_\sigma)=(\sigma-1)(a'_n\log\tilde p+b'_n)$.
On a alors $c_\sigma=(\sigma-1)(a_n\log\tilde p+b_n)$, avec $a_n=p^{-n}\varphi^{-n}(a'_n)\in F$
et $b_n=\varphi^{-n}(b'_n)\in\B^{]0,p^ns]}$. 
Par ailleurs, comme $H^0(G_K,\B^{]0,r]}\oplus F\log\tilde p)=F$, on a $a_{n+1}=a_n$ pour tout $n$ et
$b_{n+1}-b_n\in F$, pour tout $n$. On peut donc modifier $b_n$ par un \'el\'ement de $F$ de telle sorte
que $b_{n+1}=b_n:=b$ pour tout $n$, et alors $b\in\cap_{n\geq 0}\B^{]0,p^ns]}=\B$.

Le r\'esultat s'en d\'eduit.
\end{proof}

\begin{lemm}\label{gopal11}
$H^i(G_K,t^j\B)=0$ pour tout $j\in\Z$ et tout $i\geq 2$.
\end{lemm}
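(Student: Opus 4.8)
The plan is to reduce, by a d\'evissage along the $t$-adic filtration, to the single vanishing $H^i(G_K,t\B)=0$ for $i\geq 2$, and then to deduce this from the finite-radius rings $\B^{]0,r]}$, for which the needed input is already contained in \cite{drpst}.

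For the reduction, twist the first isomorphism of Lemma~\ref{gopal8} by $t^j$: for each $j\in\Z$ one obtains a short exact sequence of $G_K$-modules $0\to t^{j+1}\B\to t^j\B\to\prod_{n\in\Z}C(j)\to 0$. As $G_K$ is compact, continuous cohomology commutes with the product, so Proposition~\ref{gopal1} shows that $\prod_n C(j)$ has vanishing cohomology in every degree whenever $j\neq 0$. Chaining the long exact sequences, $H^i(G_K,t^j\B)\cong H^i(G_K,\B)$ for all $j\leq 0$ and $H^i(G_K,t^j\B)\cong H^i(G_K,t\B)$ for all $j\geq 1$. Finally, the case $j=0$ reads $0\to t\B\to\B\to\prod_n C\to 0$, and as Proposition~\ref{gopal1} now only gives $H^i(G_K,\prod_n C)=0$ for $i\geq 2$, its long exact sequence yields $H^i(G_K,\B)=0$ for $i\geq 2$ once $H^i(G_K,t\B)=0$ for $i\geq 2$. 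So it suffices to prove $H^i(G_K,t\B)=0$ for $i\geq 2$.

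For this, write $t\B=\varprojlim_r t\B^{]0,r]}$, the transition maps being restriction of functions (continuous, with dense image); by the topological Mittag-Leffler criterion the projective limit is then also the derived one, and $\rg(G_K,t\B)={\rm R}\varprojlim_r\rg(G_K,t\B^{]0,r]})$. The two facts needed at finite radius are that $H^i(G_K,t\B^{]0,r]})=0$ for $i\geq 2$ and that the restriction maps $H^1(G_K,t\B^{]0,r]})\to H^1(G_K,t\B^{]0,r']})$ are surjective for $r'\leq r$; both follow from the proposition of \cite{drpst} invoked in the proof of Lemma~\ref{gopal10}, combined (for the second) with the description of $\B^{]0,r]}/t\B^{]0,r]}$ in Lemma~\ref{gopal8} and the long exact sequence of Proposition~\ref{gopal1} — as there, modulo the line spanned by the class of $\sigma\mapsto(\sigma-1)\log\tilde p={\rm Kum}_p(\sigma)\,t$, the group $H^1(G_K,t\B^{]0,r]})$ is a cokernel of a map $H^0(G_K,\B^{]0,r]})\to\prod_{p^nr\geq 1}K$, on which the transition maps are visibly surjective. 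Granting this, the Milnor exact sequence for the derived limit gives $H^i(G_K,t\B)=0$ for $i\geq 3$ at once, and $H^2(G_K,t\B)={\rm R}^1\varprojlim_r H^1(G_K,t\B^{]0,r]})$, which vanishes by the surjectivity just noted. With the reduction above, this proves the lemma.

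The main obstacle is the finite-radius vanishing $H^i(G_K,t\B^{]0,r]})=0$ for $i\geq 2$ — the substantive content of \cite{drpst} — rather than the d\'evissage and limit bookkeeping, which are routine. One should not expect to get it by a naive $t$-adic d\'evissage of $t\B^{]0,r]}$: that ring is far from $t$-adically complete, its $t$-adic completion being the infinite product of the $t\bdr^+(C_y)$ over the zeros $y$ of $t$ in $Y_{\rm FF}^{]0,r]}$ — whose Galois cohomology does vanish in degrees $\geq 2$ by Proposition~\ref{gopal3} — so the $t$-adic filtration only detects the cohomology concentrated at the zeros of $t$ and misses the contribution of the annulus itself; it is precisely to absorb that contribution that $\log\tilde p$ is adjoined.
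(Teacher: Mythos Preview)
Your d\'evissage to $j=1$ and the derived-limit bookkeeping are fine, but the route is very different from the paper's, and the point where your argument actually rests is not justified by the reference you cite.

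The paper's proof is three lines and uniform in $j$: set $H_K=\ker\chi_{\rm cycl}$ and $\Gamma_K=G_K/H_K$; almost \'etale descent gives $H^i(H_K,t^j\B)=0$ for all $i\geq 1$, so inflation identifies $H^i(G_K,t^j\B)$ with $H^i(\Gamma_K,(t^j\B)^{H_K})$; since $\Gamma_K$ has cohomological dimension~$1$, this vanishes for $i\geq 2$. No d\'evissage, no limits.

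Your argument, by contrast, reduces everything to the single input $H^i(G_K,t\B^{]0,r]})=0$ for $i\geq 2$, which you attribute to the proposition of \cite{drpst} used in Lemma~\ref{gopal10}. That proposition, as invoked there, is a statement about $1$-cocycles: it produces $a\in F$ and $b\in\B^{]0,s]}$ with $c_\sigma=(\sigma-1)(a\log\tilde p+b)$. It says nothing about $i\geq 2$, so this is a genuine gap in your write-up. The finite-radius vanishing you need is true, but the cleanest way to prove it is precisely the Hochschild--Serre argument above, applied to $\B^{]0,r]}$ in place of $\B$; and that same argument applies directly to $t^j\B$ for every $j$, making the d\'evissage to $j=1$, the passage to finite radius, the Mittag--Leffler step, and the analysis of transition maps on $H^1$ all superfluous. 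In short: your scaffolding is correct but rests on an input whose natural proof already proves the lemma outright.
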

\begin{proof}
Soient $H_K:={\rm Ker}\,\chi_{\rm cycl}$ et $\Gamma_K:=G_K/H_K={\rm Gal}(K(\bmu_{p^\infty})/K)$.
Par descente presque \'etale, on prouve que $H^i(H_K,t^j\B)=0$ pour tout $i\geq 1$ et tout~$j$.
Donc l'inflation $H^i(\Gamma_K,H^0(H_K,t^j\B))\to H^i(G_K,t^j\B)$ est un isomorphisme
pour tout $i\geq 1$ et tout $j$. Le r\'esultat est donc
une cons\'equence de ce que $\Gamma_K$ est de dimension
cohomologique $1$.
\end{proof}

\begin{prop}\label{gopal12}
On a une suite exacte
$$0\to F\cdot{\rm Kum}_p\to H^1(G_K,\B)\to (\prod\nolimits_{n\in\Z}K)\cdot\log\chi_{\rm cycl}\to 0$$
\end{prop}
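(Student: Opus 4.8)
The plan is to exploit the short exact sequence of $G_K$-modules coming from Lemma~\ref{gopal8}, namely
\begin{equation*}
0\to t\B\to \B\to \prod\nolimits_{n\in\Z}C\to 0,
\end{equation*}
and feed it into the long exact cohomology sequence. First I would record what we already know about the outer terms. On the right, Proposition~\ref{gopal1} gives $H^0(G_K,\prod_n C)=\prod_n K$, $H^1(G_K,\prod_n C)=(\prod_n K)\cdot\log\chi_{\rm cycl}$, and $H^i=0$ for $i\geq 2$ (the product is over a countable index set, and Galois cohomology of a profinite group commutes with such products because it is computed by continuous cochains and $G_K$ acts componentwise — this is the one point I would make sure to justify). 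On the left, $t\B$: Lemma~\ref{gopal11} gives $H^i(G_K,t\B)=0$ for $i\geq 2$, and Lemma~\ref{gopal10} identifies the image of $H^1(G_K,t\B)$ in $H^1(G_K,\B)$ as the line $F\cdot(\sigma\mapsto(\sigma-1)\log\tilde p)=F\cdot{\rm Kum}_p$ (the cocycle $\sigma\mapsto(\sigma-1)\log\tilde p$ equals $\sigma\mapsto{\rm Kum}_p(\sigma)\,t$ by the defining relation for $\log\tilde p$, so its class in $H^1(G_K,\B)$ is what we are calling $F\cdot{\rm Kum}_p$).

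Next I would run the long exact sequence. The relevant segment is
\begin{equation*}
H^0(G_K,\B)\to H^0\big(G_K,\textstyle\prod_n C\big)\xrightarrow{\ \partial\ } H^1(G_K,t\B)\to H^1(G_K,\B)\to H^1\big(G_K,\textstyle\prod_n C\big)\to H^2(G_K,t\B).
\end{equation*}
Here $H^0(G_K,\B)=F$ (this is Corollary~\ref{gopal6} applied to a sub-object, or can be read off directly) maps into $H^0(G_K,\prod_n C)=\prod_n K$; the image is the diagonal copy of $F$ sitting inside $\prod_n K$ via $K\supset F$ in each factor. The last term $H^2(G_K,t\B)=0$ by Lemma~\ref{gopal11}, so the map $H^1(G_K,\B)\to(\prod_n K)\cdot\log\chi_{\rm cycl}$ is surjective. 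Its kernel is the image of $H^1(G_K,t\B)\to H^1(G_K,\B)$, which by Lemma~\ref{gopal10} is exactly $F\cdot{\rm Kum}_p$, a one-dimensional $F$-space; in particular this map is injective on that line and we obtain the asserted short exact sequence
\begin{equation*}
0\to F\cdot{\rm Kum}_p\to H^1(G_K,\B)\to \big(\textstyle\prod_n K\big)\cdot\log\chi_{\rm cycl}\to 0.
\end{equation*}

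The main obstacle, and the place where I would spend the most care, is controlling the connecting map $\partial\colon H^0(G_K,\prod_n C)\to H^1(G_K,t\B)$ — equivalently, checking that Lemma~\ref{gopal10}'s one-dimensional image is not further cut down. Since $H^1(G_K,t\B)$ maps onto a subspace of $H^1(G_K,\B)$ which we must show is all of $F\cdot{\rm Kum}_p$, we need $\partial$ to vanish, i.e. the image of $H^0(G_K,\B)=F$ to fill out the kernel of $\partial$ — but $H^0(G_K,\prod_n C)=\prod_n K$ is much larger than $F$, so $\partial$ is visibly \emph{not} zero in general, and the correct statement is rather that $H^1(G_K,t\B)\to H^1(G_K,\B)$ already has one-dimensional image by Lemma~\ref{gopal10}, so the cokernel of $\partial$ computed inside $H^1(G_K,t\B)$ is automatically that line. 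Thus the real content is entirely Lemma~\ref{gopal10}: once its image computation is in hand, exactness of the long sequence plus the vanishing statements of Lemmas~\ref{gopal11} and Proposition~\ref{gopal1} assemble the result formally. I would therefore present the proof as: (1) write the long exact sequence; (2) plug in Proposition~\ref{gopal1}, Lemma~\ref{gopal11}; (3) invoke Lemma~\ref{gopal10} to identify the kernel term; (4) read off surjectivity on the right from the vanishing of $H^2(G_K,t\B)$.
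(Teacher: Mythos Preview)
Your proposal is correct and follows exactly the same approach as the paper: feed the short exact sequence $0\to t\B\to\B\to\prod_n C\to 0$ into the long exact sequence, invoke Proposition~\ref{gopal1} for $H^1(G_K,\prod_n C)$, Lemma~\ref{gopal11} for the vanishing of $H^2(G_K,t\B)$, and Lemma~\ref{gopal10} for the kernel term. Your penultimate paragraph's detour about the connecting map $\partial$ is unnecessary and a little confused~--- since Lemma~\ref{gopal10} computes the \emph{image} of $H^1(G_K,t\B)\to H^1(G_K,\B)$ directly as a one-dimensional $F$-line, you never need to analyse $\partial$ at all; but you reach the right conclusion in the end.
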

\begin{proof}
Cela r\'esulte de
la suite exacte longue de cohomologie associ\'ee \`a la suite exacte
$0\to t\B\to \B\to \prod_{n\in\Z}C\to 0$, de la nullit\'e de $H^2(G_K,t\B)$ (lemme~\ref{gopal11},
pour la surjectivit\'e \`a droite), du lemme~\ref{gopal10} et de la prop.\,\ref{gopal1}
dont on tire l'identit\'e $H^1(G_K,\prod_{n\in\Z}C)=(\prod_{n\in\Z}K)\cdot\log\chi_{\rm cycl}$.
\end{proof}

\begin{rema}\label{gopal12.1}
Les m\^emes arguments montrent que, si $I\subset]0,\infty[$ est un intervalle ouvert, et
si $\Z(I)$ est l'ensemble des $n\in\Z$, tel que $p^{-n}\in I$,
on a une suite exacte
$$0\to F\cdot{\rm Kum}_p\to H^1(G_K,\B^I)\to (\prod\nolimits_{n\in\Z(I)}K)\cdot\log\chi_{\rm cycl}\to 0$$
\end{rema}

\begin{lemm}\label{gopal13}
Si $\Lambda=\B,\B[\frac{1}{t}]$,
soit $\Lambda[\log\tilde p,\log t]^{\leq k}$ l'espace des polyn\^omes de degr\'e total~$\leq k$
en $\log\tilde p,\log t$. Alors 
$H^i(G_K,\Lambda[\log\tilde p,\log t]^{\leq k})=0$ pour tout $k$, si $i\geq 2$,
et $H^1(G_K,\Lambda[\log\tilde p,\log t]^{\leq k})\to H^1(G_K,\Lambda[\log\tilde p,\log t]^{\leq k+1})$
est identiquement nulle.
\end{lemm}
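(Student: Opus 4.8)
The strategy is a double induction on the degree in $\log t$ and $\log\tilde p$, exactly mirroring the proof of Proposition \ref{gopal2} but now with two generators whose Galois actions differ markedly: $\log t$ contributes $\log\chi_{\rm cycl}$ to cocycles, whereas $\log\tilde p$ contributes the Kummer cocycle ${\rm Kum}_p$ with a factor of $t$. First I would set up a filtration of $\Lambda[\log\tilde p,\log t]^{\leq k}$ by $G_K$-stable submodules. The cleanest choice is to filter by the degree in $\log\tilde p$ alone: letting $M_{a,b}$ denote the span of $(\log\tilde p)^i(\log t)^j$ with $i\le a$, $i+j\le b$, one gets short exact sequences of $G_K$-modules
\[
0\to M_{a-1,k}\to M_{a,k}\to \Lambda[\log t]^{\leq k-a}\cdot(\log\tilde p)^a\to 0,
\]
where the quotient, as a $G_K$-module, is $\Lambda[\log t]^{\leq k-a}(a)$ (the twist coming from $\sigma(\log\tilde p)^a\equiv(\log\tilde p)^a+a\,{\rm Kum}_p(\sigma)\,t\,(\log\tilde p)^{a-1}$, so the top symbol transforms by... actually the quotient is untwisted, since $\log\tilde p$ itself is only shifted by a multiple of $t$; the relevant point is that the quotient is isomorphic as a $G_K$-module to $\Lambda[\log t]^{\leq k-a}$).

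So the first reduction is to the case $a=0$, i.e. to computing $H^i(G_K,\Lambda[\log t]^{\leq m})$ for all $m$. For $\Lambda=\B$ this I would handle by the filtration $0\to \B[\log t]^{\leq m-1}\to\B[\log t]^{\leq m}\to\B\to 0$, feeding in Lemma \ref{gopal11} (so $H^i=0$ for $i\ge 2$ for every piece, hence for $\B[\log t]^{\leq m}$) and Proposition \ref{gopal12} together with Proposition \ref{gopal1} to control $H^1$. The case $\Lambda=\B[\frac1t]$ reduces to $\Lambda=\B$ by Lemma \ref{gopal9} (after checking that inverting $t$ commutes with taking polynomials in $\log t,\log\tilde p$ and with the relevant colimits, which is immediate since $G_K$ is compact). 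The $i\ge 2$ vanishing for the full $M_{k,k}=\Lambda[\log\tilde p,\log t]^{\leq k}$ then follows by induction on $a$ using the displayed sequence and the long exact sequence.

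The real work is the second assertion: that $H^1(G_K,\Lambda[\log\tilde p,\log t]^{\leq k})\to H^1(G_K,\Lambda[\log\tilde p,\log t]^{\leq k+1})$ is zero. Here I would argue that a class in $H^1$ of the degree-$\le k$ space, when pushed into degree $\le k+1$, can be killed by an explicit primitive built from the degree-$(k+1)$ monomials, just as in Proposition \ref{gopal2} where $\log\chi_{\rm cycl}$ was killed by adjoining one more power of $\log t$. Concretely, one needs: every $1$-cocycle with values in $M_{k,k}$ becomes a coboundary in $M_{k+1,k+1}$. I expect to prove this by induction on $k$, using the long exact sequences attached to the filtration and the fact — visible already in Lemmas \ref{gopal10} and \ref{gopal12} — that the $H^1$ of the base ring $\B$ is generated by the classes of $(\sigma-1)\log\tilde p$ (coming from $t\B$, i.e. from $F\cdot{\rm Kum}_p$) and of $\log\chi_{\rm cycl}$ (the image in $(\prod_n K)\cdot\log\chi_{\rm cycl}$). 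The first is killed by $\log\tilde p\in M_{1,0}$; the second by $\log t\in M_{0,1}$. Propagating this through the tower of extensions is where the degree must go up: a cocycle valued in $(\log\tilde p)^a(\log t)^b$-type pieces will need a primitive one degree higher, and one must check the higher-degree correction terms (involving the $t$-twist in the $\log\tilde p$ action) can be absorbed, using the vanishing already established in lower degree. The main obstacle, then, is bookkeeping the interaction between the two filtration directions — ensuring that killing a class in one graded piece does not reintroduce uncontrolled classes in another — and for this I would likely choose the single-step filtration by total degree, $0\to M_{k,k}\to M_{k+1,k+1}\to \mathrm{gr}_{k+1}\to 0$ with $\mathrm{gr}_{k+1}=\bigoplus_{a+b=k+1}\Lambda\cdot(\log\tilde p)^a(\log t)^b$, reduce to showing $H^1(G_K,M_{k+1,k+1})$ has the same dimension as predicted and that the connecting and restriction maps line up, exactly as in the four-bullet induction of Proposition \ref{gopal2}.
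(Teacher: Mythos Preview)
Your plan contains the right ingredients --- the filtration by total degree with graded pieces $\cong\Lambda^{\oplus(k+1)}$, Lemma~\ref{gopal11} for $i\ge 2$, Prop.~\ref{gopal12} and Lemma~\ref{gopal10} for the structure of $H^1(G_K,\B)$, and Lemma~\ref{gopal9} to pass to $\B[\tfrac1t]$ --- and after some detours you arrive at the same total-degree filtration the paper uses. The paper, however, organizes the argument more directly: it skips your preliminary filtration by $\log\tilde p$-degree entirely and runs a single induction on the total degree $k$, with an explicit constructive trivialization at each step. The base case $k=0$ takes a cocycle $c_\sigma\in\B$, lifts the element $a\in\prod_n K$ representing its class in $H^1(G_K,\prod_n C)$ to some $\tilde a\in\B$, subtracts the coboundary of $\tilde a\log t$, then a further coboundary from $\B$ to land in $t\B$, and finally invokes Lemma~\ref{gopal10} to trivialize by an $F$-multiple of $\log\tilde p$. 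The inductive step projects a cocycle in $\Lambda[\log\tilde p,\log t]^{\le k}$ to the graded piece $\Lambda^{k+1}$, applies the base case coordinate-wise to get primitives in $\Lambda[\log\tilde p,\log t]^{\le 1}$, multiplies these by the degree-$k$ monomials, and subtracts; the remainder has degree $\le k-1$ and the inductive hypothesis applies, the total primitive living in degree $\le k+1$.

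There is a genuine gap in your closing proposal to ``show $H^1(G_K,M_{k+1,k+1})$ has the same dimension as predicted \dots\ exactly as in the four-bullet induction of Proposition~\ref{gopal2}.'' That argument was a finite dimension count over $K$, workable only because every $H^i$ in sight was one-dimensional. Here, by Prop.~\ref{gopal12}, already $H^1(G_K,\B)$ surjects onto $\prod_{n\in\Z}K$ and is infinite-dimensional over $F$, so no such count is available; the constructive trivialization is not optional but the heart of the proof. In the same vein, saying $H^1(G_K,\B)$ is ``generated by'' two classes is misleading: the $\log\chi_{\rm cycl}$-piece is an entire $(\prod_n K)$-family, and killing it requires lifting an \emph{arbitrary} $a\in\prod_n K$ to $\tilde a\in\B$ and subtracting the coboundary of $\tilde a\log t$, not merely adjoining one more power of $\log t$ with scalar coefficient.
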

\begin{proof}
On a $\B[\log\tilde p,\log t]^{\leq k}/\B[\log\tilde p,\log t]^{\leq k-1}\cong \B^{\oplus ^{k+1}}$.
Le r\'esultat pour $i\geq 2$ s'en d\'eduit par une r\'ecurrence imm\'ediate.

Passons \`a la preuve de l'\'enonc\'e pour $i=1$. On va faire la preuve par r\'ecurrence sur $k$.
Commen\c{c}ons par v\'erifier le r\'esultat pour $k=0$.
Soit $\sigma\mapsto c_\sigma$ un $1$-cocycle \`a valeurs dans $\B$. Son image dans
$H^1(G_K,\prod_{n\in\Z}C)$ est de la forme $a \log\chi_{\rm cycl}$, avec $a\in\prod_{n\in \Z}K$.
Choisissons un relev\'e $\tilde a$ de $a$ dans $\B$. Le $1$-cocycle $c_\sigma-(\sigma-1)(\tilde a\log t)$
a une image nulle dans $H^1(G_K,\prod_{n\in\Z}C)$.  Il existe donc $b\in \B$ tel que
$c'_\sigma:=c_\sigma-(\sigma-1)(\tilde a\log t+b)$ soit \`a valeurs dans $t\B$.  Le lemme~\ref{gopal10}
permet d'en d\'eduire qu'il existe $c\in \B+F\log\tilde p$ tel que
$c'_\sigma=(\sigma-1)c$.  On a donc prouv\'e que $\sigma\mapsto c_\sigma$ se trivialise
dans $\B+\B\log t+F\log\tilde p\subset \B[\log\tilde p,\log t]^{\leq 1}$, ce qui prouve le r\'esultat
pour $k=0$ dans le cas $\Lambda=\B$.  Pour le prouver dans le cas $\Lambda=\B[\frac{1}{t}]$, on utilise
le lemme~\ref{gopal9} pour modifier notre cocycle par un cobord de mani\`ere \`a le transformer
en un cocycle \`a valeurs dans~$\B$.

Supposons maintenant le r\'esultat vrai pour $k-1$. Soit $\sigma\mapsto c_\sigma$ un $1$-cocycle
\`a valeurs dans $\Lambda[\log\tilde p,\log t]^{\leq k}$. 
\'Ecrivons son image modulo
$\Lambda[\log\tilde p,\log t]^{\leq k-1}$ sous la forme
$\sum_{i=0}^kc_{i,\sigma}(\log t)^i(\log\tilde p)^{k-i}$; alors
$\sigma\mapsto c_{i,\sigma}$ est un $1$-cocycle \`a valeurs dans $\Lambda$,
et donc, d'apr\`es le cas $k=0$,
 est de la forme $(\sigma-1)c_i$, avec $c_i\in \Lambda[\log\tilde p,\log t]^{\leq 1}$.
Si on retranche \`a $\sigma\mapsto c_\sigma$ le cobord 
$\sigma\mapsto (\sigma-1)\cdot \big(\sum_{i=0}^kc_{i}(\log t)^i(\log\tilde p)^{k-i}\big)$, le
cocycle obtenu est \`a valeurs dans $\Lambda[\log\tilde p,\log t]^{\leq k-1}$. L'hypoth\`ese de r\'ecurrence
permet de conclure car $\sum_{i=0}^kc_{i}(\log t)^i(\log\tilde p)^{k-i}\in \Lambda[\log\tilde p,\log t]^{\leq k+1}$.
\end{proof}

\section{La conjecture $C_{\rm st}$ pour la cohomologie \`a support compact}
On munit $\bpdr$ de la filtration d\'efinie par ${\rm Fil}^i\bpdr:=t^i\bpdr^+$,
et on \'etend $\varphi$ et $N$ \`a $\bpff$ en imposant
$\varphi(\log t)=\log t$ et $N(\log t)=0$.
\begin{conj}\label{cst2}
Si $Y$ est une vari\'et\'e analytique, partiellement propre,
 d\'efinie sur une extension finie $K$ de $\Q_p$,
on a des isomorphismes naturels {\rm(de $K$-modules filtr\'es et
$(\varphi, N,G_K)$-modules respectivement)}:
\begin{align*}
&(C_{\rm dR})&
\rg_{\rm dR}(Y)&\simeq {\rm RHom}_{G_K}(\rg_{{\proeet},c}(Y_{\C_p},\Q_p(d))[2d],\bpdr)\\
&(C_{\rm st})&
\rg_{\rm HK}(Y_{\C_p}) &\simeq 
\varinjlim\nolimits_{[L:K]<\infty}
{\rm RHom}_{G_L}(\rg_{{\proeet},c}(Y_{\C_p},\Q_p(d))[2d],\bpff)
\end{align*}
\end{conj}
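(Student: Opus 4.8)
\noindent{\it Esquisse de strat\'egie.}\quad Le plan est de r\'eduire la conjecture~\ref{cst2} aux th\'eor\`emes de comparaison $C_{\rm dR}$ et $C_{\rm st}$ pour la cohomologie pro\'etale \emph{ordinaire} (les \'enonc\'es sous-jacents \`a la conjecture~\ref{cst1}), au moyen de la dualit\'e de Poincar\'e--Lefschetz et des r\'esultats d'annulation de cette note (th.\,\ref{gopal4} et~\ref{gopal7}).

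On commencerait par \'etablir, dans une cat\'egorie d\'eriv\'ee convenable de $\Q_p$-modules solides \`a action continue de $G_K$, la dualit\'e de Poincar\'e--Lefschetz
$$\rg_{{\proeet},c}(Y_{\C_p},\Q_p(d))[2d]\;\simeq\;{\rm RHom}_{\Q_p}\big(\rg_{\proeet}(Y_{\C_p},\Q_p),\Q_p\big)$$
pour $Y$ lisse, partiellement propre, de dimension~$d$ : c'est classique lorsque $Y$ est propre, et en g\'en\'eral c'est l'ingr\'edient principal \`a importer de la dualit\'e arithm\'etique pour la cohomologie pro\'etale $p$-adique des espaces analytiques. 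Le membre de droite de la conjecture devient alors un ${\rm RHom}_{G_K}$ d'un bidual ; la r\'eflexivit\'e de $\rg_{\proeet}(Y_{\C_p},\Q_p)$ (qui, pour $Y$ partiellement propre, est extension de morceaux de type Banach, de type Fr\'echet et de type compact) permet d'\'ecrire, pour $\Lambda=\bdr[\log t]$, resp.\ $\Lambda=\B[\log\tilde p,\log t,\frac{1}{t}]$,
$${\rm RHom}_{G_K}\big({\rm RHom}_{\Q_p}(\rg_{\proeet}(Y_{\C_p},\Q_p),\Q_p),\,\Lambda\big)\;\simeq\;\rg\big(G_K,\;\rg_{\proeet}(Y_{\C_p},\Q_p)\,\widehat{\otimes}_{\Q_p}\Lambda\big).$$

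On injecterait ensuite la comparaison pour la cohomologie ordinaire. La comparaison $\bdr$ donne un isomorphisme $G_K$-\'equivariant et filtr\'e $\rg_{\proeet}(Y_{\C_p},\Q_p)\,\widehat{\otimes}_{\Q_p}\bdr[\log t]\simeq\rg_{\rm dR}(Y)\,\widehat{\otimes}_K\bdr[\log t]$ ; comme $G_K$ agit trivialement sur $\rg_{\rm dR}(Y)$, le th.\,\ref{gopal4} (qui dit que $\rg(G_K,\bdr[\log t])$ vaut $K$ en degr\'e~$0$, est nul en degr\'e~$\geq 1$, et respecte la filtration) permet de sortir le facteur $\bdr[\log t]$ et fournit, en prenant la cohomologie galoisienne, $\rg_{\rm dR}(Y)$ comme complexe filtr\'e de $K$-espaces : c'est le $C_{\rm dR}$ cherch\'e. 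Pour $C_{\rm st}$ on proc\`ede de m\^eme avec la comparaison $\bst$, $\rg_{\proeet}(Y_{\C_p},\Q_p)\,\widehat{\otimes}\,\bst\simeq\rg_{\rm HK}(Y)\,\widehat{\otimes}_F\bst$ (cas potentiellement semi-stable, d'o\`u la limite inductive sur les extensions $L$ finies de $K$), en travaillant sur $\B[\log\tilde p,\log t,\frac{1}{t}]$, en invoquant le th.\,\ref{gopal7} appliqu\'e \`a $L$ (qui annule la cohomologie galoisienne de $\B[\frac{1}{t}][\log\tilde p,\log t]$ en degr\'e~$\geq 1$), et en utilisant l'action prescrite de $\varphi$ et $N$ sur $\log t$ et $\log\tilde p$ pour recoller la structure de $(\varphi,N,G_K)$-module.

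L'obstacle principal sera les deux premi\`eres \'etapes : d\'emontrer la dualit\'e de Poincar\'e--Lefschetz pour la cohomologie pro\'etale $p$-adique de vari\'et\'es \emph{partiellement propres} (donc souvent de dimension infinie, de type Stein) et le faire de fa\c con \`a rendre rigoureux l'isomorphisme de r\'eflexivit\'e et les produits tensoriels compl\'et\'es ci-dessus --- c'est l\`a qu'intervient le formalisme solide/condens\'e, et que le cadre analytique (par opposition au cadre propre ou alg\'ebrique) cause de vraies difficult\'es. Une difficult\'e secondaire sera de v\'erifier que les isomorphismes de comparaison pour la cohomologie ordinaire sont compatibles \`a cette dualit\'e --- c'est-\`a-dire que dualiser les comparaisons $\bdr$ et $\bst$ pour $H^*_{\proeet}$ produit exactement l'\'enonc\'e \`a support compact, et non un tordu de celui-ci --- et de suivre la filtration et l'op\'erateur de monodromie $N$ le long de toutes les identifications.
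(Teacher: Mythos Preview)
The statement you are attempting to prove is labeled in the paper as a \emph{conjecture}, and the paper gives no proof of it. The only case singled out as known is the proper one: rem.\,\ref{cst3}(iii) says ``cette conjecture soit un th\'eor\`eme si $Y$ est propre'' for the $C_{\rm dR}$ part. So there is no ``paper's own proof'' to compare your proposal with; what you have written is a plausible strategy for an open problem, not a proof.

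On the strategy itself, the outline (Poincar\'e--Lefschetz duality, reflexivity, comparison for ordinary cohomology, then the vanishing results th.\,\ref{gopal4} and~\ref{gopal7}) is coherent and is exactly the kind of argument the paper is setting the stage for. But two points deserve emphasis. First, your reduction takes as input the comparison isomorphisms for \emph{ordinary} pro\'etale cohomology of partially proper $Y$; these are precisely the content of conj.\,\ref{cst1}, which is itself conjectural in this generality. So your argument would reduce one conjecture to another of comparable difficulty, not prove it outright. Second, you correctly flag Poincar\'e duality for partially proper analytic varieties and the reflexivity step in the solid formalism as the serious obstacles; these are genuine open issues (the cohomology groups involved are typically infinite-dimensional and the topological bookkeeping is delicate), and without them the rest of the argument cannot even be formulated precisely. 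In the proper case, where both duality and conj.\,\ref{cst1} are available, your sketch does go through for $C_{\rm dR}$, in agreement with rem.\,\ref{cst3}(iii).
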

\begin{rema}\label{cst3}
(i) On peut mettre la conj.\,\ref{cst1} sous une forme plus proche de celle ci-dessus
en utilisant la dualit\'e de Poincar\'e pour les cohomologies de de Rham et de Hyodo-Kato

(ii)  Pour formuler une conjecture comme ci-dessus, la premi\`ere id\'ee qui vient
\`a l'esprit
serait de consid\'erer
${\rm Hom}_{G_K}(H^i_{{\proeet},c}(Y_{\C_p},\Q_p),\bdr)$ (apr\`es tout, \c{c}a marche si $Y$ est propre!).
Le probl\`eme est que, si $Y$ est une courbe affine, un petit calcul montre que
${\rm Hom}_{G_K}(H^1_{{\proeet},c}(Y_{\C_p},\Q_p),\bdr)$ ne contient qu'une petite partie
de $H^1_{\rm dR}(Y)$ (celle correspondant au sous-espace de pente~$1$  
de $H^1_{\rm HK}(Y_{\C_p})$, cf.~rem.\,\ref{cst17} ci-dessous).

(iii) La seconde id\'ee serait de consid\'erer un ${\rm RHom}_{G_K}$ plut\^ot qu'un ${\rm Hom}_{G_K}$.
Le cas
d'une vari\'et\'e propre montre que ce n'est pas encore \c{c}a car, si
$V$ est une repr\'esentation de de Rham, ${\rm Ext}_{G_K}^1(V,\bdr)\cong {\rm Ext}_{G_K}^0(V,\bdr)$
(en interpr\'etant ces groupes comme $H^i(G_K,\bdr\otimes V^\dual)$ et en utilisant
le fait que $\bdr\otimes V^\dual\cong\bdr^{\oplus d}$ puisque $V$ est de Rham,
l'isomorphisme est le cup-produit avec $\log\chi_{\rm cycl}$).  Il s'ensuit
que l'on va retrouver deux fois les groupes de cohomologie de de Rham au lieu
de une... Il faut donc tuer le ${\rm Ext}_{G_K}^1$ pour les repr\'esentations de de Rham
(et donc rajouter $\log t$ pour trivialiser $\log\chi_{\rm cycl}$).
Cela conduit \`a la conjecture $C_{\rm dR}$ ci-dessus, et on s'est d\'ebrouill\'e pour que 
{\it cette
conjecture soit un th\'eor\`eme si $Y$ est propre}.

(iv) Pour formuler $C_{\rm st}$ l'id\'ee naturelle serait de remplacer $\bdr$ par $\bst$
comme pour la conj.\,\ref{cst1}, mais la cohomologie de $\bst[\log t]$ n'est pas $0$
en degr\'e~$1$, et donc on retombe sur le probl\`eme mentionn\'e au point (iii).
On est donc forc\'e de consid\'erer l'avatar $\bff$ de $\bst$ pour formuler
la conjecture. Remarquons que le r\'esultat ne change pas si on remplace
$\bst$ par $\bff$  dans la conj.\,\ref{cst1}. 

(v) Implicitement, ${\rm RHom}_{G_L}$ est calcul\'e dans une cat\'egorie ad\'equate
de $\Q_p$-espaces vectoriels topologiques munis d'une action continue de $G_L$.
En ce qui concerne $C_{\rm st}$, il faut faire un peu attention \`a la signification
de $\varinjlim_{[L:K]<\infty}$ : si les groupes de cohomologie de de Rham sont
de dimension finie, la formule est valable telle quelle; dans le cas g\'en\'eral, cette
condition de finitude est satisfaite localement, ce qui donne une recette pour exprimer
les $H^i_{\rm HK}(Y_{\C_p})$.

(vi) Les formules pour l'action de $\varphi$ et $N$ sur $\log t$ sont naturelles mais ne
jouent pas un r\^ole tr\`es important: $\log t$ est juste introduit pour tuer les
classes parasites, et donc n'appara\^{\i}t pas dans celles qui survivent. 
\end{rema}

\begin{rema}\label{cst17}
Les calculs ci-dessous sont ceux qui nous ont donn\'e confiance en la conjecture; ces calculs sont purement
heuristiques et ne constituent pas une preuve dans le cas consid\'er\'e\footnote{Une vraie preuve
demanderait d'utiliser des techniques nettement plus sophistiqu\'ees comme
dans~\cite{CGN}, par exemple,
o\`u l'on examine ce qui se passe si on remplace $\bpdr$ par $\Q_p$.}.

Soit $Y$ l'analytifi\'e d'une courbe alg\'ebrique lisse, 
non propre, d\'efinie sur une extension finie $K$ de $\Q_p$;
les $H^i_{\rm dR}(Y)$ sont des $K$-modules de rang fini, nuls si $i\geq 2$.
Les th\'eor\`emes de comparaison~\footnote{Pour les courbes, on peut utiliser~\cite[th.\,0.7]{CDN2} 
ou~\cite[th.\,1.8]{CDN3}.}  et la suite exacte longue reliant
la cohomologie \`a support compact, la cohomologie usuelle et celle du bord\footnote{Maintenant que la
th\'eorie a \'et\'e d\'evelopp\'ee,
on pourrait aussi utiliser les th\'eor\`emes de comparaison pour la cohomologie
\`a support compact~\cite[\S\,7.2]{AGN}, qui donnent le r\'esultat sous une forme un peu
diff\'erente.}, nous donnent les isomorphismes
et suite exacte suivants, dans lesquels on utilise l'abr\'eviation
$H^i_c:=H^i_{{\proeet},c}(Y_{\C_p},\Q_p(1))$,
\begin{align*}
H^i_c=0,{\text{ si $i\neq 1,2$}},\quad H_c^1 \simeq V,
\quad 0\to V_1 \to H_c^2\to V_2\to 0
\end{align*}
o\`u 

$\bullet$ $V:=(H^1_{{\rm HK},c}(Y_{\C_p})\otimes t\bst^+)^{N=0,\varphi=p}$ est de dimension finie sur $\Q_p$, et est de Rham
en tant que repr\'esentation de $G_K$.

$\bullet$ $V_1$ est le quotient de $H^1_{{\rm dR},c}(Y_{\C_p})$ par l'image de
$V'_1:=(H^1_{{\rm HK},c}(Y_{\C_p})\otimes \bst^+)^{N=0,\varphi=p}$ (cette image est $V'_1/V$);
c'est une presque ${\C_p}$-repr\'esentation au sens de Fontaine.

$\bullet$ $V_2$ est une extension de $\Q_p$ par le ${\C_p}$-dual topologique $(\O(Y_{\C_p})/{\C_p})^\dual$ de
$\O(Y_{\C_p})/{\C_p}$ (un ${\C_p}$-module topologique de rang infini).

Posons ${\rm Ext}^i:={\rm Ext}^i_{G_K}$ et
$${\rm RHom}^i:=H^i({\rm RHom}_{G_K}(\rg_{{\proeet},c}(Y_{\C_p},\Q_p(1))[2],\bpdr))$$
Comme ${\rm Ext}^2(-,\bdr)$ devrait \^etre $0$, la suite spectrale convergeant vers ${\rm RHom}^i$ fournit
des suites exactes, pour $i\in \Z$,
\begin{equation}\label{rhom}
0\to {\rm Ext}^1(H^{3-i}_c)\to {\rm RHom}^i\to {\rm Ext}^0(H^{2-i}_c)\to 0
\end{equation}
Comme ${\rm Ext}^0({\C_p},\bpdr)=0$ et ${\rm Ext}^1({\C_p},\bpdr)=0$, 
on a 
\begin{align*}
{\rm Ext}^0(V_1,\bpdr)=0,\quad {\rm Ext}^1(V_1,\bpdr)={\rm Ext}^0(V'_1/V,\bpdr)\\ 
{\rm Ext}^0((\O(Y_{\C_p})/{\C_p})^\dual,\bpdr)=0,\quad {\rm Ext}^1((\O(Y_{\C_p})/{\C_p})^\dual,\bpdr)=0
\end{align*}
Les seuls termes non nuls\footnote{Le calcul des ${\rm Ext}^0$ utilise~\cite[th.\,5.8]{CN5}.}
 dans les suites exactes~(\ref{rhom}) sont donc\footnote{
On a des isomorphismes fonctoriels (de Hyodo-Kato) $H^i_{\rm dR}(Y)\simeq (\Qbar_p\otimes H^i_{\rm HK}(Y_{\C_p}))^{G_K}$
et $H^i_{{\rm dR},c}(Y)\simeq (\Qbar_p\otimes H^i_{{\rm HK},c}(Y_{\C_p}))^{G_K}$. Les pentes de Frobenius sur
$H^1_{\rm HK}(Y_{\C_p})$ et $H^1_{{\rm HK},c}(Y_{\C_p})$ sont dans l'intervalle 
$[0,1]$; si $?\in\{\ ,c\}$, on note $H^1_{{\rm HK},?}(Y_{\C_p})^{(r)}$
le sous-espace de pente $r$ 
et on pose $H^1_{{\rm dR},?}(Y):=(\Qbar_p\otimes H^1_{{\rm HK},?}(Y_{\C_p})^{(r)})^{G_K}$;
la dualit\'e entre $H^1_{\rm dR}(Y)$ et $H^1_{{\rm dR},c}(Y)$ met en dualit\'e
$H^1_{\rm dR}(Y)^{(r)}$ et $H^1_{{\rm dR},c}(Y)^{(1-r)}$. On a aussi 
$V=t(H^1_{{\rm HK},c}(Y_{\C_p})^{(0)}\otimes\bcris^+)^{\varphi=1}$.}
\begin{align*}
{\rm Ext}^0(H_c^1,\bpdr)=
(H^1_{{\rm dR},c}(Y)^{(0)})^\dual &=H^1_{\rm dR}(Y)^{(1)}
\subset H^1_{\rm dR}(Y)\\
{\rm Ext}^0(H_c^2,\bpdr)=K=H^0_{\rm dR}(Y),\quad
{\rm Ext}^1&(H_c^2,\bpdr)={\rm Ker}\big(H^1_{\rm dR}(Y)\to (H^1_{{\rm dR},c}(Y)^{(0)})^\dual\big)
\end{align*}
Injecter ce r\'esultat dans (\ref{rhom}) fournit des suites exactes donnant corps \`a la conjecture~\ref{cst2}
qui affirme que ${\rm RHom}^i\simeq H^i_{\rm dR}(Y)$, pour tout~$i$.
(Si on avait utilis\'e $\bdr$ au lieu de $\bpdr$, il y aurait des termes parasites provenant de
${\rm Ext}^1(V,\bdr)$ et ${\rm Ext}^1(\Q_p,\bdr)$ contribuant respectivement \`a
${\rm Ext}^1(H^1,\bdr)$ et ${\rm Ext}^1(H^2,\bdr)$.)

\end{rema}

\end{document}